\begin{document}
\newtheorem{theo}{Theorem}[section]
\newtheorem*{thm*}{Theorem A}
\newtheorem{defin}[theo]{Definition}
\newtheorem{rem}[theo]{Remark}
\newtheorem{lem}[theo]{Lemma}
\newtheorem{cor}[theo]{Corollary}
\newtheorem{prop}[theo]{Proposition}
\newtheorem{exa}[theo]{Example}
\newtheorem{exas}[theo]{Examples}
\numberwithin{equation}{section}
%
%
\subjclass[2010]{Primary 35J47  Secondary 35B33; 35J50; 35J91}
\keywords{Schr\"{o}dinger-Poisson systems, Sign-changing solutions, High energy solutions, Infinitely many solutions, Critical nonlinearity, Variational methods}
\thanks{}
\title[High energy sign-changing solutions]{High energy sign-changing solutions to Schr\"{o}dinger-Poisson type systems}

\author[]{Cyril Joel Batkam}
\address{
The Fields Institute,
\newline
Toronto, Ontario, M5T 3J1, CANADA.}
\email{cbatkam@fields.utoronto.ca}
\maketitle
\begin{abstract}
We prove the existence of infinitely many high energy sign-changing solutions for some classes of Schr\"{o}dinger-Poisson systems in bounded domains, with nonlinearities having subcritical or critical growth. Our approach is variational and relies on an application of a new sign-changing version of the symmetric mountain pass theorem.
\end{abstract}
%
\section{Introduction}
This paper is concerned with the existence of high energy sign-changing solutions to some elliptic systems of Schr\"{o}dinger-Poisson type. The Schr\"{o}dinger-Poisson system is a simple model used for the study of quantum transport in semiconductor devices that can be written as
\begin{equation}\label{sp1}
   \left\{
      \begin{array}{ll}
       -\imath\hbar\frac{\partial \psi}{\partial t}(t,x)= -\frac{\hbar^2}{2m}\Delta \psi(t,x)+\phi(t,x)\psi(t,x)-g\big(\psi(t,x)\big)\quad \text{in }\Omega & \hbox{} \\
       \,\,\,\, -\Delta \phi(t,x)=|\psi(t,x)|^2\qquad\qquad\qquad \qquad\qquad\quad\quad\qquad\quad\,\,\text{in }\Omega, & \hbox{} 
      \end{array}
    \right.
\end{equation}
where $\Omega$ is a domain in $\mathbb{R}^3$, $\hbar$ is the Planck constant, $m$ is the particle mass, $\Delta$ is the usual spatial Laplace operator, $\psi:\mathbb{R}\times\overline{\Omega}\to\mathbb{C}$ is the wave function, $\phi:\mathbb{R}\times\overline{\Omega}\to\mathbb{C}$ is the electrostatic potential, and $g:\mathbb{R}\to\mathbb{C}$ is a nonlinear term describing external perturbations and interactions between many particles. In case $\Omega$ is bounded, one can impose the boundary conditions $\psi=\phi=0$, which physically mean that the particles are constraint to live in $\Omega$. For more information on the physical relevance of the Schr\"{o}dinger-Poisson system, we refer to \cite{BF98,M90,N90}.
\par The study of standing wave solutions of \eqref{sp1}, that is solutions of the form 
\begin{equation*}
\Big(\psi(t,x)=e^{-\imath \omega t}u(x),\phi(t,x)\Big),\quad u(x)\in\mathbb{R},\,\,\omega>0, 
\end{equation*}
 leads to a problem of the form
\begin{equation}\label{sp2}
 \qquad   \left\{
      \begin{array}{ll}
        -\Delta u+\phi u=h(x,u)\quad \text{in }\Omega & \hbox{} \\
       \qquad\,\, -\Delta \phi=u^2\qquad \quad\text{in }\Omega, & \hbox{} 
       \end{array}
    \right.
\end{equation}
which has been widely study in the last decade. Many important results concerning existence and non existence of solutions, multiplicity of solutions, least energy solutions, radial and non radial solutions, semiclassical limit and concentrations of solution have been obtained. See for instance \cite{AR08,AP08,BF98,C03,CV10,IV08,RS08,S10,WZ07,ZZ08} and the references quoted there. In these papers, some of the solutions found are nonnegative, but in many cases the sign of the solutions cannot be decided. The existence of a sign-changing solutions to \eqref{sp2} was considered recently by Alves and Souto \cite{AS14}, Kim and Seok \cite{KS12} and Wang and Zhou \cite{WZ}. They obtained a non trivial sign-changing solution by using the method of the Nehari manifold. We recall that a solution $(u,\phi)$ to \eqref{sp2} is said to be sign-changing if $u$ changes its sign. The existence of many sign-changing solutions to \eqref{sp2} was an open problem until the recent work of Liu, Wang and Zhang \cite{LWZ}. In that very nice paper, they considered \eqref{sp2} in the whole space $\mathbb{R}^3$ and they obtained infinitely many sign-changing solutions by using a new version of the symmetric mountain pass theorem in the presence of invariant sets of the descending flow established in \cite{LLW}. However, the nonlinear term $h$ was assumed to be of subcritical growth. Moreover, it seems that there is no result in literature on the existence of sign-changing solutions to the Schr\"{o}dinger-Poisson system with critical growing nonlinearities.
\par The first goal of this paper is to prove the existence of (many) sign-changing solutions to \eqref{sp2} by allowing the nonlinear function $h$ to contain a critical term. More precisely, we investigate the existence of multiple sign-changing solutions to the system
\begin{equation*}
 (SP)_\lambda\qquad   \left\{
      \begin{array}{ll}
        -\Delta u+\phi u=f(x,u)+\lambda u^5\quad \text{in }\Omega & \hbox{} \\
         \qquad\,\, -\Delta \phi=u^2\qquad\qquad\qquad \text{in }\Omega & \hbox{} \\
        \qquad u=\phi=0 \qquad \qquad\qquad  \text{ on }\partial\Omega, & \hbox{}
      \end{array}
    \right.
\end{equation*}
where $\Omega$ is an open bounded subset of $\mathbb{R}^3$ with smooth boundary, $\lambda\geq0$, and $f:\overline{\Omega}\to\mathbb{R}$ satisfies the following conditions:
\vspace{0.3cm}
\begin{enumerate}
  \item[$(f_1)$] $f\in C\big(\overline{\Omega}\times\mathbb{R},\mathbb{R}\big)$ and there exists a constant $c>0$ such that
\begin{equation*}
    |f(x,u)|\leq c\big(1+|u|^{p-1}\big),\quad\text{where }4<p<6;
\end{equation*}
  \item[$(f_2)$] $f(x,u)=\circ(|u|)$, uniformly in $x\in \overline{\Omega}$, as $u\to0$;
  \vspace{0.3cm}
  \item[$(f_3)$] there exists $\mu>4$ such that $0<\mu F(x,u)\leq uf(x,u)$ for all $u\neq0$ and for all $x\in\overline{\Omega}$, where $F(x,u)=\int_0^u f(x,s)ds$;
  \vspace{0.3cm}
  \item [$(f_4)$] $f(x,-u)=-f(x,u)$ for all $(x,u)\in\overline{\Omega}\times\mathbb{R}$.
\end{enumerate}
\vspace{0.3cm}
 We shall prove the following results.
\begin{theo}[Subcritical case]\label{result1}
Assume that $\lambda=0$. If $(f_{1,2,3,4})$ are satisfied then $(SP)_0$ has a sequence $\big(u_k,\phi_k\big)_{k\geq1}$ of solutions such that $u_k$ is sign-changing and
\begin{equation*}
\frac{1}{2}\int_\Omega|\nabla u_k|^2+\frac{1}{4}\int_\Omega\phi_ku_k^2-\int_\Omega F(x,u_k)\to\infty,\quad\text{as }k\to\infty.
\end{equation*}
\end{theo}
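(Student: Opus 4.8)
\emph{Setup and reduction.} The plan is to kill the Poisson equation first. Work in $H:=H_0^1(\Omega)$ with $\|u\|^2=\int_\Omega|\nabla u|^2$. For each $u\in H$ the problem $-\Delta\phi=u^2$, $\phi=0$ on $\partial\Omega$, has a unique solution $\phi_u:=(-\Delta)^{-1}(u^2)\in H$, and $\phi_u\ge 0$ by the maximum principle; the map $u\mapsto\phi_u$ is continuous, and the quartic functional $T(u):=\int_\Omega\phi_u u^2$ is of class $C^1$, $4$-homogeneous, weakly continuous, with $\langle T'(u),u\rangle=4T(u)$. Substituting $\phi=\phi_u$ into the first equation, solutions of $(SP)_0$ correspond to critical points of
\[
I(u)=\tfrac12\|u\|^2+\tfrac14 T(u)-\int_\Omega F(x,u),\qquad u\in H,
\]
and the energy in the statement is precisely $I(u_k)$. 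By $(f_1)$--$(f_2)$, $I\in C^1(H,\mathbb R)$; by $(f_4)$, $I$ is even with $I(0)=0$. For the Palais--Smale condition, along a sequence with $I(u_n)$ bounded and $I'(u_n)\to 0$ the Ambrosetti--Rabinowitz inequality $(f_3)$ gives
\[
c+o(1)\|u_n\|\ \ge\ I(u_n)-\tfrac1\mu\langle I'(u_n),u_n\rangle\ \ge\ \Bigl(\tfrac12-\tfrac1\mu\Bigr)\|u_n\|^2+\Bigl(\tfrac14-\tfrac1\mu\Bigr)T(u_n),
\]
and since $\mu>4$ both coefficients are positive, so $(u_n)$ is bounded; the compact embeddings $H\hookrightarrow L^q(\Omega)$ for $q<6$, $(f_1)$, and the compactness of $u\mapsto\phi_u u$ in the subcritical range then yield a strongly convergent subsequence.

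\emph{Invariant cones and descending flow.} Let $P^\pm=\{u\in H:\pm u\ge0\text{ a.e.}\}$ and, for small $\varepsilon>0$, $P^\pm_\varepsilon=\{u\in H:\operatorname{dist}(u,P^\pm)<\varepsilon\}$, $D:=P^+_\varepsilon\cup P^-_\varepsilon$. Define the auxiliary operator $A:H\to H$ by letting $A(u)$ be the unique solution $v$ of $-\Delta v+\phi_u v=f(x,u)$ in $\Omega$, $v=0$ on $\partial\Omega$ (well posed because $\phi_u\ge0$). Using $(f_1)$--$(f_2)$ and compact Sobolev embeddings, $A$ is continuous and compact; it is odd because $(f_4)$ gives $f(x,-u)=-f(x,u)$ while $\phi_{-u}=\phi_u$; its fixed points are exactly the critical points of $I$; and there are estimates $\langle I'(u),u-A(u)\rangle\ge c_0\|u-A(u)\|^2$ and $\|I'(u)\|\le c_1\|u-A(u)\|$, so $u\mapsto u-A(u)$ is essentially a pseudo-gradient field for $I$. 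The crucial feature is sign invariance: if $u\in P^-$ then by $(f_3)$ the function $f(x,u)$ has the sign of $u$, hence $f(x,u)\le0$, so $-\Delta v+\phi_u v\le0$ and the strong maximum principle forces $v=A(u)\le0$, with $v<0$ in $\Omega$ unless $u\equiv0$; a quantitative argument then upgrades this to $\operatorname{dist}(A(u),P^-)\le\frac12\operatorname{dist}(u,P^-)$ for $u\in P^-_\varepsilon$, and symmetrically for $P^+$. Replacing $u-A(u)$ by a locally Lipschitz vector field inheriting both the descent and the contraction properties, the induced negative flow leaves $\overline{P^+_\varepsilon}$ and $\overline{P^-_\varepsilon}$, hence $D$, positively invariant while strictly decreasing $I$ off the critical set. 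Consequently every critical point of $I$ outside $D$ is sign-changing.

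\emph{Minimax and conclusion.} By $(f_3)$ one has $F(x,u)\ge c_2|u|^\mu-c_3$ with $\mu>4$, so on any finite-dimensional subspace $E\subset H$ the quartic term $\frac14T(u)\le C\|u\|^4$ is dominated and $I(u)\to-\infty$ as $\|u\|\to\infty$ in $E$; while $(f_1)$--$(f_2)$ give $I(u)\ge\alpha>0$ on a small sphere of $H$ intersected with the complement of $D$. Since $I$ is even, satisfies $(PS)$, and is equipped with the invariant set $D$ together with the descending flow constructed above, one is exactly in position to apply the sign-changing version of the symmetric mountain pass theorem: choosing an increasing sequence of finite-dimensional subspaces (e.g.\ spanned by eigenfunctions of $-\Delta$ in $H_0^1(\Omega)$) produces odd minimax levels $c_k\to+\infty$, each attained at a critical point $u_k\notin D$, that is, a sign-changing solution $(u_k,\phi_{u_k})$ of $(SP)_0$ with $I(u_k)=c_k\to\infty$, which is the claim.

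\emph{Main obstacle.} The delicate point is the invariant-set structure of the second paragraph: one must build a single locally Lipschitz pseudo-gradient vector field for $I$ that both decreases $I$ and whose flow preserves the $\varepsilon$-neighbourhoods of the cones $P^\pm$, which in turn rests on the compactness of the nonlocal operator $A$ and on the quantitative contraction estimate $\operatorname{dist}(A(u),P^\pm)<\operatorname{dist}(u,P^\pm)$ near $P^\pm$. The presence of the nonlocal term $\phi_u u$ is exactly what complicates this, since $A$ is not the gradient of $I$ and the usual single-functional arguments do not apply verbatim.
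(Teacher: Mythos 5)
Your reduction to the single functional $I$ and the construction of the auxiliary operator $A$ with the cone-contraction property are sound, but your route is not the one taken in the paper, and as written it has two genuine gaps. The paper does \emph{not} run the invariant-set/descending-flow machinery in the full space $H_0^1(\Omega)$: it truncates to the finite-dimensional eigenspaces $Y_m=\oplus_{j\leq m}X_j$, applies the finite-dimensional sign-changing fountain theorem (Theorem \ref{scft}) to $I_{0,m}=I_0|_{Y_m}$ to produce critical points $u_{k,m}\in V_{\mu_m/2}(S_m)$ with $I_0(u_{k,m})\in[b_k,\max_{B_k}I_0]$, and only then passes to the limit $m\to\infty$, proving separately that the limit $u_k$ is still sign-changing via uniform lower bounds on $\|u_{k,m}^{\pm}\|$ coming from $(f_1)$--$(f_2)$. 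You instead invoke an infinite-dimensional sign-changing symmetric mountain pass theorem as a black box, which is essentially the route of Liu--Wang--Zhang that the paper explicitly sets out to simplify.

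The first gap: to conclude $c_k\to\infty$ you need the fountain geometry $b_k:=\inf\{I(u)\,:\,u\in Z_k,\ \|u\|=r_k\}\to\infty$ with $Z_k=\overline{\oplus_{j\geq k}X_j}$; your ``small sphere'' estimate $I\geq\alpha>0$ yields a single mountain-pass level, not a divergent sequence of levels. The missing ingredient is the estimate $I(u)\geq\tfrac12\|u\|^2-c\beta_k^p\|u\|^p-c$ on $Z_k$ with $\beta_k=\sup\{|u|_p:u\in Z_k,\ \|u\|=1\}\to0$ (Lemma \ref{l1} and Remark \ref{rembk}). The second, more serious gap: the abstract theorem requires the linking spheres $N_k=\{u\in Z_k:\|u\|=r_k\}$ to avoid the neighbourhoods $D$ of the cones, and you never verify this. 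In the full space this is delicate: $N_k$ is not compact, so the fact that $Z_k\cap P=\{0\}$ does not by itself give $\operatorname{dist}(N_k,\pm P)>0$. The paper's finite-dimensional truncation exists precisely to make $N_k^m$ compact, whence $\delta_m=\operatorname{dist}(N_k^m,\pm P_m)>0$ and one may choose $\mu_m<\delta_m$ (see \eqref{deltam} and Remark \ref{sm}). Until you either establish a uniform separation in $H_0^1(\Omega)$ or pass to finite dimensions and then to the limit as the paper does, the application of the sign-changing fountain theorem is not justified. A minor further point: the estimate $\|I'(u)\|\leq c_1\|u-A(u)\|$ should carry the factor $(1+\|u\|^2)$ produced by the nonlocal term, as in \eqref{aa2}; this is harmless since it is only used on bounded sets, but it should be stated correctly.
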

\begin{theo}[Critical case]\label{result2}
Assume that $(f_{1,2,3,4})$ are satisfied. Then there exists a sequence $\big(\lambda_k,u_k,\phi_k\big)_{k\geq1}$ such that $\lambda_k\to0^+$, $(u_k,\phi_k)$ is solution to $(SP)_{\lambda_k}$, $u_k$ is sign-changing, and
\begin{equation*}
\frac{1}{2}\int_\Omega|\nabla u_k|^2+\frac{1}{4}\int_\Omega\phi_ku_k^2dx-\int_\Omega F(x,u_k)-\frac{\lambda_k}{6}\int_\Omega u_k^6\to\infty,\quad\text{as }k\to\infty.
\end{equation*}
\end{theo}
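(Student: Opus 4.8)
The proof of Theorem \ref{result2} will proceed by a parameter trick: we set up the sign-changing symmetric mountain pass scheme for a one-parameter family of functionals and then drive $\lambda$ to $0^+$ so as to defeat the loss of compactness caused by the critical term. For $u\in H:=H_0^1(\Omega)$ let $\phi_u\in H$ be the unique solution of $-\Delta\phi=u^2$ in $\Omega$, $\phi=0$ on $\partial\Omega$; then $\phi_u\ge 0$, the map $u\mapsto\phi_u$ is of class $C^1$, and
\[
I_\lambda(u)=\frac12\int_\Omega|\nabla u|^2+\frac14\int_\Omega\phi_u u^2-\int_\Omega F(x,u)-\frac\lambda6\int_\Omega u^6
\]
is an even $C^1$ functional on $H$ whose critical points $u$ yield solutions $(u,\phi_u)$ of $(SP)_\lambda$. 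Two structural facts will be used repeatedly: first, the Poisson term $u\mapsto\tfrac14\int_\Omega\phi_u u^2$ together with its derivative is weakly sequentially continuous (if $u_n\rightharpoonup u$ then $u_n^2\to u^2$ in $L^{6/5}(\Omega)$, hence $\phi_{u_n}\to\phi_u$ in $H$), so it never obstructs compactness; second, $(f_3)$ with $\mu>4$ makes every Palais--Smale sequence of $I_\lambda$ bounded in $H$ (examine the combinations $I_\lambda-\tfrac1\mu I_\lambda'u$ and $I_\lambda-\tfrac16 I_\lambda'u$ according to whether $\mu<6$ or $\mu\ge 6$). Finally, $I_\lambda\le I_0$ on $H$ since $\lambda\ge 0$, and for $\lambda=0$ there is no critical term at all.

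First I would fix the minimax geometry. Let $\{e_j\}$ be the Dirichlet eigenfunctions of $-\Delta$ on $\Omega$ and put $Z_k=\overline{\operatorname{span}}\{e_j:j\ge k\}$, so that $\beta_k:=\sup\{\|u\|_{L^p(\Omega)}:u\in Z_k,\ \|u\|=1\}\to 0$ as $k\to\infty$. For each $k$ let $c_k(\lambda)$ be the $k$-th sign-changing symmetric minimax value of $I_\lambda$ produced by our abstract theorem (relative to the invariant cones $\pm P$ of nonnegative functions). On the one hand, the admissible families for level $k$ all contain one and the same compact symmetric set $A_k$ lying in a finite-dimensional subspace, on which $I_0$ is bounded, whence $c_k(\lambda)\le\sup_{A_k}I_\lambda\le\sup_{A_k}I_0=:M_k<\infty$, a bound independent of $\lambda$. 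On the other hand, combining $(f_1)$--$(f_2)$ with the Sobolev inequality $\|u\|_{L^6}^2\le S^{-1}\|u\|^2$ gives, on $Z_k\cap\{\|u\|=\rho\}$,
\[
I_\lambda(u)\ \ge\ \tfrac14\rho^2-C\beta_k^{\,p}\rho^{\,p}-\tfrac\lambda6 S^{-3}\rho^{6};
\]
choosing $\rho=\rho_k\to\infty$ so that $C\beta_k^{\,p}\rho_k^{\,p-2}=\tfrac1{16}$ and restricting to $\lambda\le\Lambda_k:=\tfrac38 S^{3}\rho_k^{-4}$ so that $\tfrac\lambda6 S^{-3}\rho_k^{4}\le\tfrac1{16}$, and using that (by the linking built into the construction) every admissible set for level $k$ meets $Z_k\cap\{\|u\|=\rho_k\}$, we obtain
\[
c_k(\lambda)\ \ge\ \tfrac18\rho_k^2=:b_k\longrightarrow\infty\qquad\text{for every }\lambda\le\Lambda_k.
\]

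The heart of the matter — and the step I expect to be the main obstacle — is a local Palais--Smale condition: $I_\lambda$ satisfies $(PS)_c$ whenever $c<\mathfrak c(\lambda)$, where $\mathfrak c(\lambda)$ is a Brezis--Nirenberg type threshold equal to a fixed positive constant times $S^{3/2}\lambda^{-1/2}$ (small enough to accommodate concentration of bubbles of either sign, which is intrinsic to the sign-changing framework). Given a bounded $(PS)_c$ sequence $u_n\rightharpoonup u$, the nonlocal term converges by weak continuity, so $u$ is a critical point with $I_\lambda(u)\ge 0$; a concentration-compactness decomposition of $u_n-u$ shows that any loss of compactness would produce at least one critical bubble carrying energy $\ge\tfrac13 S^{3/2}\lambda^{-1/2}$, whence $c\ge I_\lambda(u)+\tfrac13 S^{3/2}\lambda^{-1/2}$, which is impossible below $\mathfrak c(\lambda)$. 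The delicate points here are the bookkeeping of the Poisson term along the decomposition and making the threshold explicit in $\lambda$; note also that the abstract theorem must then be applied with this \emph{local} compactness (e.g.\ on the sublevel set $\{I_\lambda<\mathfrak c(\lambda)\}$) rather than with a global Palais--Smale condition.

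With these ingredients in place, the conclusion follows. Since $M_k$ is finite and $\mathfrak c(\lambda)\to\infty$ as $\lambda\to 0^+$, for each $k$ we may pick $\lambda_k\in(0,\Lambda_k)$, with $\lambda_k\to 0^+$, so small that $M_k<\mathfrak c(\lambda_k)$. Then $b_k\le c_k(\lambda_k)\le M_k<\mathfrak c(\lambda_k)$, so $I_{\lambda_k}$ enjoys the Palais--Smale condition at the level $c_k(\lambda_k)$; applying our sign-changing symmetric mountain pass theorem to $I_{\lambda_k}$ produces a sign-changing critical point $u_k$ with $I_{\lambda_k}(u_k)=c_k(\lambda_k)$. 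Setting $\phi_k=\phi_{u_k}$, the triple $(\lambda_k,u_k,\phi_k)$ solves $(SP)_{\lambda_k}$, $u_k$ changes sign, $\lambda_k\to 0^+$, and
\[
\frac12\int_\Omega|\nabla u_k|^2+\frac14\int_\Omega\phi_k u_k^2-\int_\Omega F(x,u_k)-\frac{\lambda_k}{6}\int_\Omega u_k^6=c_k(\lambda_k)\ge b_k\to\infty,
\]
which is exactly the assertion of Theorem \ref{result2}. (Theorem \ref{result1} is the case $\lambda\equiv 0$: the critical term is absent, $(PS)_c$ holds at every level, and the same theorem applies directly to $I_0$.)
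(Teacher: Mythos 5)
Your overall strategy is the right one and shares the paper's two quantitative ingredients: the minimax levels are squeezed between $b_k\to\infty$ and the $\lambda$-independent upper bound $\max_{B_k}I_0$, and compactness is recovered below a $\lambda$-dependent threshold that blows up as $\lambda\to0^+$, so that for each $k$ one may choose $\lambda_k$ small (in the paper, $\lambda_k<\min\{\Lambda_k^1,\Lambda_k^2\}$, with $\Lambda_k^1\to0$ forcing $\lambda_k\to0^+$). Your bubble-energy computation, giving a residual energy at least $\tfrac13 s_0^2$ with $s_0^4\geq S^3/\lambda$, is essentially the argument the paper runs in \eqref{threee}--\eqref{four} via the Brezis--Lieb splitting.

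There is, however, a genuine gap in \emph{where} you place the compactness step. You propose to apply the sign-changing fountain theorem directly on $H_0^1(\Omega)$ under a local $(PS)_c$ condition on the sublevel set $\{I_\lambda<\mathfrak c(\lambda)\}$. The abstract result available here (Theorem \ref{scft}) is not of that form: it is a purely finite-dimensional statement about the restrictions $\Phi_m=\Phi|_{Y_m}$, with no compactness hypothesis, and this is not incidental. Two of its hypotheses are not supplied by your infinite-dimensional setup: (a) the separation $(A_2)$, $N_k^m\subset S_m$, rests on $\delta_m=\operatorname{dist}(N_k^m,-P_m\cup P_m)>0$, which is obtained in \eqref{deltam} from the \emph{compactness} of the finite-dimensional sphere $N_k^m$; for the full sphere $N_k\subset Z_k$ this distance need not be positive, since the cone of nonnegative functions has empty interior in $H_0^1(\Omega)$; (b) the odd, locally Lipschitz, cone-preserving vector field of $(A_3)$ is constructed (Lemmas \ref{Aop} and \ref{Bop}) only on $Y_m$. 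The paper circumvents both issues by a double limit: Theorem \ref{scft} applied to $I_{\lambda,m}$ yields genuine critical points $u_{k,m}\in Y_m$ with no Palais--Smale condition at all (Lemma \ref{lemam}, since $\dim Y_m<\infty$), and the critical exponent intervenes only in the second limit $m\to\infty$, where your concentration estimate reappears as the proof of the claim \eqref{claim}. So you must either invoke and justify an infinite-dimensional invariant-cone theorem with local $(PS)$ --- which means reproving (a) and (b) on $H_0^1(\Omega)$, e.g.\ by measuring the distance to the cones in an $L^p$-norm as in \cite{LWZ} --- or reroute your (correct) bubble-energy computation to the Galerkin limit $m\to\infty$, as the paper does. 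As written, the central compactness step is attached to an abstract theorem that does not apply in the form you use it; the remaining ingredients (upper and lower bounds for the levels, the choice of $\lambda_k$, and the verification that $u_k$ changes sign via uniform lower bounds on $\|u_{k,m}^{\pm}\|$) are consistent with the paper.
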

\begin{rem}
It was claimed by the authors of \cite{LWZ} that Theorem \ref{result1} holds \big(see Remark 1.1 in \cite{LWZ}\big). However, the proof we provide here, which is based on a different approach, appears to be much more simpler.
\end{rem}
Our approach in proving Theorems \ref{result1} and \ref{result2} is variational and relies on a new sign-changing critical point theorem, we established in a recent paper \cite{B}, which is modelled on the fountain theorem of Barstch \cite{B93}.
\par In the second part of this paper, we will use the same approach to study the following Schr\"{o}dinger-Poisson type system:
\begin{equation*}
 (SP1)\qquad   \left\{
      \begin{array}{ll}
        -\Delta u+\psi u^3=|u|^{q-2}u\quad \text{in }\Omega & \hbox{} \\
        \quad\quad\,\,\,\,\,-\Delta \psi=\frac{1}{2} u^4\qquad\, \text{ in }\Omega & \hbox{} \\
       \quad\quad\,\,\, u=\psi=0 \qquad\quad\,\,\,\text{on }\partial\Omega, & \hbox{}
      \end{array}
    \right.
\end{equation*}
where $\Omega$ is a bounded domain in $\mathbb{R}^N$ with smooth boundary. This problem was first introduced by Azzollini, d'Avenia and Luisi in \cite{ADaL13}. By combining the method of cut-off function with variational arguments, they proved that $(SP1)$ possesses at least one non trivial solution when $\Omega$ is contained in $\mathbb{R}^3$ and $1<q<5$. In  \cite{ADa12,LLS14,LZ13}, the authors studied the case where the non local term grows critically. However, in those papers only nonnegative solutions were found. As far as we know, there is no result concerning the existence of sign-changing solutions of $(SP1)$. Therefore, the second goal of this paper is to prove that $(SP1)$ possesses sign-changing solutions.
\par Our result on this problem reads as follows:
\begin{theo}\label{result3}
Let $\Omega$ be a bounded smooth domain in $\mathbb{R}^N$ ($N=1,2$). If $q>8$ then $(SP1)$ has a sequence $\big(u_k,\psi_k\big)_{k\geq1}$ of solutions such that $u_k$ is sign-changing and
\begin{equation*}
\frac{1}{2}\int_\Omega|\nabla u_k|^2+\frac{1}{8}\int_\Omega\psi_ku_k^4-\frac{1}{q}\int_\Omega |u_k|^q\to\infty,\quad\text{as }k\to\infty.
\end{equation*}
\end{theo}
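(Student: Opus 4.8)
The plan is to remove $\psi$ from $(SP1)$ and apply the sign-changing fountain theorem of \cite{B} to the resulting functional on $H_0^1(\Omega)$. Since $N=1,2$, the embedding $H_0^1(\Omega)\hookrightarrow L^s(\Omega)$ is compact for every $s\in[1,\infty)$, so $u^4\in L^2(\Omega)$ for $u\in H_0^1(\Omega)$ and the linear problem $-\Delta\psi=\tfrac12 u^4$ in $\Omega$, $\psi=0$ on $\partial\Omega$, has a unique solution $\psi_u\in H_0^1(\Omega)\cap H^2(\Omega)$; the maximum principle gives $\psi_u\geq0$, the map $u\mapsto\psi_u$ is continuous, compact, bounded on bounded sets, and satisfies $\psi_{-u}=\psi_u$. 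Substituting $\psi=\psi_u$ into the energy $\tfrac12\int|\nabla u|^2-\tfrac14\int|\nabla\psi|^2+\tfrac14\int\psi u^4-\tfrac1q\int|u|^q$ of the system and using $\int|\nabla\psi_u|^2=\tfrac12\int\psi_u u^4$, one obtains the even functional
\[
I(u)=\frac12\int_\Omega|\nabla u|^2+\frac18\int_\Omega\psi_u u^4-\frac1q\int_\Omega|u|^q ,\qquad u\in H_0^1(\Omega),
\]
which is of class $C^1$ and whose critical points $u$, paired with $\psi_u$, are precisely the solutions of $(SP1)$. (The hypothesis $N\leq2$ is forced here: for $N\geq3$ the variational framework would require $q<2^*\leq6$, which is incompatible with $q>8$.)

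First I would verify the compactness and the geometric hypotheses of the fountain theorem, with respect to the decomposition $H_0^1(\Omega)=\overline{\bigoplus_j X_j}$ into eigenspaces of $-\Delta$. The Palais--Smale condition is routine: from
\[
I(u)-\frac1q\langle I'(u),u\rangle=\Big(\frac12-\frac1q\Big)\int_\Omega|\nabla u|^2+\Big(\frac18-\frac1q\Big)\int_\Omega\psi_u u^4
\]
and $q>8$, $\psi_u\geq0$, every Palais--Smale sequence is bounded, and strong convergence follows from the compactness of $H_0^1(\Omega)\hookrightarrow L^q(\Omega)$ and of $u\mapsto\psi_u$. For the linking geometry I would use that $\int\psi_u u^4\geq0$ is homogeneous of degree $8$ in $u$: with $\beta_k:=\sup_{u\in Z_k,\,\|u\|=1}\|u\|_{L^q}\to0$ one gets $I(u)\geq\frac12\rho_k^2-C\beta_k^q\rho_k^q$ on $\{u\in Z_k:\|u\|=\rho_k\}$, hence $\inf I\to\infty$ there for a suitable $\rho_k\to\infty$; on the finite--dimensional $Y_k$, where all norms are equivalent and $\int\psi_u u^4\leq C\|u\|^8$, we have $I(u)\leq\frac12\|u\|^2+C\|u\|^8-c\|u\|^q\to-\infty$ as $\|u\|\to\infty$ because $q>8$, so $\sup I\leq0$ on $\{u\in Y_k:\|u\|=r_k\}$ for $r_k$ large. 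Here $\|\cdot\|$ denotes the $H_0^1(\Omega)$ norm.

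The main obstacle, as always in sign-changing critical point theory, is to produce the invariant cones demanded by the abstract theorem: small neighbourhoods $D_\varepsilon^{\pm}=\{u\in H_0^1(\Omega):\operatorname{dist}(u,\pm P)<\varepsilon\}$ of the convex cones $\pm P=\{u\in H_0^1(\Omega):\pm u\geq0\}$ that are positively invariant under a descending pseudo-gradient flow of $I$ and contain no critical point of $I$ outside $\pm P$. The obvious solution operator $(-\Delta)^{-1}\big(|u|^{q-2}u-\psi_u u^3\big)$ is not manifestly order preserving because of the sign of the nonlocal term; the remedy is to absorb $\psi_u u^3=(\psi_u u^2)u$ into a nonnegative potential and work with
\[
A(u):=\big(-\Delta+\psi_u u^2\big)^{-1}\big(|u|^{q-2}u\big),
\]
an odd, compact operator whose fixed points coincide with the solutions of $(SP1)$. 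Testing the identity $-\Delta A(u)+\psi_u u^2 A(u)=|u|^{q-2}u$ against $u-A(u)$ and against $(A(u))^-$ yields
\[
\langle I'(u),u-A(u)\rangle=\|u-A(u)\|^2+\int_\Omega\psi_u u^2\big(u-A(u)\big)^2\geq\|u-A(u)\|^2
\]
and
\[
\big\|(A(u))^-\big\|^2\leq\int_\Omega(u^-)^{q-1}(A(u))^-\leq C\,\|u^-\|_{L^q}^{\,q-1}\,\big\|(A(u))^-\big\| ,
\]
together with $\|I'(u)\|\leq C_R\|u-A(u)\|$ for $u$ in bounded sets; consequently, since $\operatorname{dist}(u,P)=\|u^-\|_{H_0^1}$,
\[
\operatorname{dist}\big(A(u),P\big)\leq\big\|(A(u))^-\big\|\leq C\,\|u^-\|_{L^q}^{\,q-1}\leq C'\big(\operatorname{dist}(u,P)\big)^{q-1}.
\]
As $q-1>1$, this gives $\operatorname{dist}(A(u),P)\leq\tfrac12\operatorname{dist}(u,P)$ for $\operatorname{dist}(u,P)$ small (and the symmetric estimate for $-P$ by oddness of $A$), which is exactly the cone--invariance needed; in particular a critical point $u\in\overline{D_\varepsilon^{\pm}}$ satisfies $u=A(u)$, hence $\operatorname{dist}(u,\pm P)\leq\tfrac12\operatorname{dist}(u,\pm P)$, forcing $u\in\pm P$. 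This is precisely where $q$ must be large, so that the contraction exponent $q-1$ beats $1$. With the evenness of $I$, the Palais--Smale condition, the fountain geometry, and these invariant cones all established, the sign-changing fountain theorem of \cite{B} produces an unbounded sequence of sign-changing critical values of $I$; the corresponding critical points $u_k$, together with $\psi_k:=\psi_{u_k}$, are the desired solutions of $(SP1)$, and the divergence $I(u_k)\to\infty$ is exactly the stated conclusion.
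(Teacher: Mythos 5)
Your proposal is correct and follows essentially the same route as the paper: the same reduction to $J(u)=\tfrac12\|u\|^2+\tfrac18\int_\Omega\psi_uu^4-\tfrac1q\int_\Omega|u|^q$, the same fountain geometry coming from $q>8$ versus the degree-$8$ homogeneity of the nonlocal term, the same auxiliary operator $A(u)=(-\Delta+\psi_uu^2)^{-1}(|u|^{q-2}u)$ with $\langle J'(u),u-A(u)\rangle\geq\|u-A(u)\|^2$, and the same cone-contraction estimate $\operatorname{dist}(A(u),\pm P)\leq C\operatorname{dist}(u,\pm P)^{q-1}$; the only structural difference is that the paper runs its abstract theorem on the finite-dimensional truncations $Y_m$ and then passes to the limit $m\to\infty$, a step your Palais--Smale argument replaces. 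One small slip: $\operatorname{dist}_{H_0^1}(u,P)=\|u^-\|$ is not an identity in $H_0^1$ (only $\leq$ holds); what your displayed chain actually requires is $\|u^-\|_{L^q}\leq C\operatorname{dist}_{H_0^1}(u,P)$, which is true because $(u-w)^-\geq u^-$ pointwise for every $w\in P$, and this is exactly how the paper justifies the corresponding step.
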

\begin{rem}
As a consequence of the Lax-Milgram theorem, $(SP1)$ can be transformed into a single semilinear elliptic equation such that the non local term is homogeneous of degree 8 (see Section \ref{quatre}). Therefore, condition $q>8$ in the statement of Theorem \ref{result3} is needed to guarantee that the problem possesses the mountain pass structure, which is crucial for our argument.
However, it seems that this condition can be relaxed by means of the perturbation method used in \cite{LWZ}. 
\end{rem}
The paper is organized as follows. In Section \ref{deux}, we state the abstract critical point theorem we will apply in the proof of our main results. In Section \ref{trois}, we provide the proof of Theorems \ref{result1} and \ref{result2}. Finally, we prove Theorem \ref{result3} in Section \ref{quatre}.
\par Throughout the paper we denote by $|\cdot|_q$ the norm of the Lebesgue space $L^p(\Omega)$, by $"\rightarrow"$ the strong convergence and by $"\rightharpoonup"$ the weak convergence.
\section{Abstract preliminary}\label{deux}
In this section, we present the critical point theorem which will be applied to prove our main results. 
\par Let $\Phi$ be a $C^1$-functional defined on a Hilbert space $X$ of the form
\begin{equation}\label{space}
X:=\overline{\oplus_{j=0}^\infty X_j},\quad\text{with } \dim X_j<\infty.
\end{equation}
We introduce for $k\geq2$ and $m>k+2$ the following notations:
\begin{equation*}
Y_k:=\oplus_{j=0}^k X_j,\quad Z_k=\overline{\oplus_{j=k}^\infty X_j},\quad Z^m_k=\oplus_{j=k}^m X_j,\quad  B_k:=\big\{u\in Y_k\,|\, \|u\|\leq\rho_k\big\}, 
\end{equation*}
\begin{equation*}
N_k:=\big\{u\in Z_k\,|\,\|u\|=r_k\big\},\,\, N^m_k:=\big\{u\in Z^m_k\,|\,\|u\|=r_k\big\},\,\,\text{where }0<r_k<\rho_k,
\end{equation*}
\begin{equation*}
\Phi_m:=\Phi|_{Y_m},\, K_m:=\big\{u\in Y_m\,;\, \Phi'_m(u)=0\big\}\text{ and }\,E_m:=Y_m\backslash K_m.
\end{equation*}
Let $P_m$ be a closed convex cone of $Y_m$. We set for $\mu_m>0$
\begin{equation*}
\pm D_m^0:=\big\{u\in Y_m\,|\, dist\big(u,\pm P_m\big)<\mu_m\big\},\,\,  D_m=D_m^0\cup(-D_m^0)\,\text{and } S_m:=Y_m\backslash D_m.
\end{equation*}
We will also denote the $\alpha$-neighborhood of $S\subset Y_m$ by
\begin{equation*}
V_\alpha(S):=\big\{u\in Y_m\,|\, dist(u,S)\leq\alpha\big\},\quad\forall\alpha>0.
\end{equation*}
The following result was established by the present author in \cite{B}.  For the sake of completeness, the proof will be provided in this paper.
\begin{theo}\label{scft}
Let $\Phi\in C^1(X,\mathbb{R})$ be an even functional which maps bounded sets to bounded sets. If, for $k\geq2$ and $m>k+2$, there exist $0<r_k<\rho_k$ and $\mu_m>0$ such that
\begin{enumerate}
\item[$(A_1)$] $a_k:=\max_{\substack{u\in \partial B_k}}\Phi(u)<b_k:=\inf_{\substack{u\in N_k}}\Phi(u)$.
\item[$(A_2)$] $N^m_k\subset S_m$.
\item[$(A_3)$] There exists an odd locally Lipschitz continuous vector field $B:E_m\to Y_m$ such that:
\begin{itemize}
\item[(i)] $B\big((\pm D_m^0)\cap E_m\big)\subset \pm D_m^0$;
\item[(ii)] there exists a constant $\alpha_1>0$ such that $\big<\Phi'_m(u),u-B(u)\big>\geq\alpha_1\|u-B(u)\|^2$, for any $u\in E_m$;
\item[(iii)] for $a<b$ and $\alpha>0$, there exists $\beta>0$ such that $\|u-B(u)\|\geq\beta$ if $u\in Y_m$ is such that $\Phi_m(u)\in[a,b]$ and $\|\Phi'_m(u)\|\geq\alpha$.
\end{itemize}
\end{enumerate}
Then there exists a sequence $(u_{k,m}^n)_n\subset V_{\frac{\mu_m}{2}}(S_m)$ such that
\begin{equation*}
\lim_{\substack{n\to\infty}}\Phi'_m(u_{k,m}^n)=0\quad\text{and}\quad \lim_{\substack{n\to\infty}}\Phi_m(u_{k,m}^n)\in\big[b_k,\max_{\substack{u\in B_k}}\Phi(u)\big].
\end{equation*}
\end{theo}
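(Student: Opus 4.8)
The plan is to set up a symmetric minimax value for $\Phi_m$ on the finite-dimensional space $Y_m$, measured only over the ``admissible'' part $S_m$, and to squeeze it by a deformation in which the descending flow is forced to respect the invariant sets $\pm D_m^0$. Concretely, let $\Gamma$ be the set of maps $\gamma\in C(B_k,Y_m)$ that are odd and satisfy $\gamma|_{\partial B_k}=\mathrm{id}$, and put
\[
c:=\inf_{\gamma\in\Gamma}\ \max\bigl\{\Phi_m(\gamma(u)) : u\in B_k,\ \gamma(u)\in S_m\bigr\}.
\]
First I would check that for each $\gamma\in\Gamma$ the set $\{u\in B_k:\gamma(u)\in S_m\}$ is compact and nonempty; nonemptiness follows from the usual fountain-type intersection property $\gamma(B_k)\cap N_k^m\neq\emptyset$, proved by the standard Borsuk--Ulam/genus argument: writing $Y_m=Y_{k-1}\oplus Z_k^m$ and assuming the contrary, the open symmetric bounded neighbourhood $O=\{u\in B_k:\|P_{Z_k^m}\gamma(u)\|<r_k\}$ of $0$ would carry the odd map $u\mapsto P_{Y_{k-1}}\gamma(u)$ into $Y_{k-1}\setminus\{0\}$ on $\partial O$ (one uses $r_k<\rho_k$ on the part of $\partial O$ lying in $\partial B_k$), forcing $\dim Y_k=\mathrm{genus}(\partial O)\le\dim Y_{k-1}$, a contradiction. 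Since $N_k^m\subset S_m$ by $(A_2)$ and $N_k^m\subset N_k$, the intersection point has $\Phi$-value $\ge\inf_{N_k}\Phi=b_k$, so $c\ge b_k$; and $\gamma=\mathrm{id}_{B_k}\in\Gamma$ gives $c\le\max_{B_k}\Phi$. With $(A_1)$ this yields $a_k<b_k\le c\le\max_{B_k}\Phi$.

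\emph{Deformation.} Assume the conclusion fails. Then $\Phi_m$ has no Palais--Smale sequence inside $V_{\frac{\mu_m}{2}}(S_m)$ at the level $c$ (such a sequence would already prove the theorem, as $c\in[b_k,\max_{B_k}\Phi]$), so there are $\beta_0>0$ and $\varepsilon_0\in(0,\tfrac{c-a_k}{2})$ with $\|\Phi_m'(u)\|\ge\beta_0$ for all $u\in V_{\frac{\mu_m}{2}}(S_m)$ with $\Phi_m(u)\in[c-2\varepsilon_0,c+2\varepsilon_0]$; by $(A_3)(iii)$ also $\|u-B(u)\|\ge\beta_1>0$ there. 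I would then take an even, locally Lipschitz $\chi:Y_m\to[0,1]$ equal to $1$ on $S_m\cap\{c-\varepsilon_0\le\Phi_m\le c+\varepsilon_0\}$ (a set that avoids $K_m$ by the previous bound) and $0$ outside $\{c-2\varepsilon_0<\Phi_m<c+2\varepsilon_0\}$, near $K_m$, and outside a large ball, and consider the flow $\eta$ of $W(u):=\chi(u)\dfrac{B(u)-u}{\max(1,\|B(u)-u\|)}$ (extended by $0$), which is odd, bounded and locally Lipschitz, hence global. This flow is odd, equals $\mathrm{id}$ on $\partial B_k$ (where $\Phi_m\le a_k<c-2\varepsilon_0$ forces $W=0$), is $\Phi_m$-nonincreasing with $\frac{d}{dt}\Phi_m(\eta)\le-\chi(\eta)\frac{\alpha_1\|\eta-B(\eta)\|^2}{\max(1,\|\eta-B(\eta)\|)}$ by $(A_3)(ii)$, and---the crucial point---keeps $D_m=D_m^0\cup(-D_m^0)$ forward invariant: $D_m^0$ is open and convex, and on $\partial D_m^0$ the vector $W$ is a nonnegative multiple of $B(\cdot)-(\cdot)$, which by $(A_3)(i)$ points into $\overline{D_m^0}$, so Nagumo's invariance criterion applies (and similarly for $-D_m^0$).

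\emph{Conclusion.} Choose $T$ large and $\gamma\in\Gamma$ with $\max\{\Phi_m(\gamma(u)):\gamma(u)\in S_m\}<c+\varepsilon_0$, and set $\tilde\gamma:=\eta(T,\gamma(\cdot))\in\Gamma$. If $u\in B_k$ satisfies $\tilde\gamma(u)\in S_m$, then, since $D_m$ is forward invariant and $\eta(T,\gamma(u))\notin D_m$, the whole curve $t\mapsto\eta(t,\gamma(u))$, $0\le t\le T$, stays in $S_m\subset V_{\frac{\mu_m}{2}}(S_m)$; in particular $\gamma(u)\in S_m$, so $\Phi_m(\gamma(u))<c+\varepsilon_0$. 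Were $\Phi_m(\tilde\gamma(u))\ge c$, then $\Phi_m$ would lie in $[c,c+\varepsilon_0]$ all along this curve, where $\chi\equiv1$ and $\|\eta-B(\eta)\|\ge\beta_1$, so $\Phi_m$ would decay at a rate $\ge\kappa>0$ and $\Phi_m(\tilde\gamma(u))\le c+\varepsilon_0-\kappa T<c$ once $T$ is large, a contradiction. Hence $\Phi_m(\tilde\gamma(u))<c$ for all such $u$, i.e. $\max\{\Phi_m(\tilde\gamma(u)):\tilde\gamma(u)\in S_m\}<c$, contradicting the definition of $c$ and $\tilde\gamma\in\Gamma$. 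Therefore the conclusion holds, with limiting energy equal to $c\in[b_k,\max_{B_k}\Phi]$.

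I expect the main obstacle to be precisely the construction of $\eta$: it must be simultaneously odd, fix $\partial B_k$, strictly lower $\Phi_m$, and preserve the cones $\pm D_m^0$, while $B$ is only defined off $K_m$ and quantitative control of $\Phi_m'$ (hence of $u-B(u)$, via $(A_3)(iii)$) is available only on $V_{\frac{\mu_m}{2}}(S_m)$. The device that reconciles these is the forward invariance of $D_m$: a trajectory that ends in $S_m$ has never entered $D_m$, so it has remained throughout in the region where the gradient estimate---and therefore the decay estimate used above---is valid. The hypothesis that $\Phi$ maps bounded sets to bounded sets is what guarantees $\Phi_m$ stays finite along the (uniformly bounded-speed) flow.
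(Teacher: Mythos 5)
Your proposal is correct and follows essentially the same route as the paper: a symmetric minimax over odd maps fixing $\partial B_k$, the Borsuk--Ulam intersection with $N_k^m\subset S_m$ to get $c\geq b_k$, and a contradiction via an odd, $\Phi_m$-decreasing deformation built from $B$ that keeps $D_m$ forward invariant. The only differences are organizational (you fold the paper's separately stated deformation lemma into the argument, normalize the vector field differently, and use forward invariance of $D_m$ to keep the trajectory in $S_m$ where the paper instead bounds the total displacement by $\mu_m/4$), and these do not affect correctness.
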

 The proof of this theorem relies on the following deformation lemma.
\begin{lem}\label{defor}
Let $\Phi\in C^1(X,\mathbb{R})$ be an even functional which maps bounded sets to bounded sets. Fix $m$ sufficiently large and assume that the condition $(A_3)$ of Theorem \ref{scft} holds. Let $c\in\mathbb{R}$ and $\varepsilon_0>0$ such that
\begin{equation}\label{one}
\forall u\in\Phi_m^{-1}\big([c-2\varepsilon_0,c+2\varepsilon_0]\big)\cap V_{\frac{\mu_m}{2}}(S_m)\,:\, \|\Phi_m'(u)\|\geq\varepsilon_0.
\end{equation}
Then for some $\varepsilon\in]0,\varepsilon_0[$ there exists $\eta\in C\big([0,1]\times Y_m,Y_m\big)$ such that:
\begin{enumerate}
\item[(i)] $\eta(t,u)=u$ for $t=0$ or $u\notin \Phi_m^{-1}\big([c-2\varepsilon,c+2\varepsilon]\big)$;
\item[(ii)] $\eta\big(1,\Phi_m^{-1}(]-\infty,c+\varepsilon])\cap S_m\big)\subset \Phi_m^{-1}\big(]-\infty,c-\varepsilon]\big)$;
\item[(iii)] $\Phi_m\big(\eta(\cdot,u)\big)$ is not increasing, for any $u$;
\item[(iv)] $\eta([0,1]\times D_m)\subset D_m$;
\item[(v)] $\eta(t,\cdot)$ is odd, for any $t\in[0,1]$.
\end{enumerate} 
\end{lem}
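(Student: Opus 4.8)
The plan is to build, on the finite-dimensional space $Y_m$, an odd negative-pseudo-gradient flow that decreases $\Phi_m$ at a definite rate on the relevant band while leaving $D_m$ invariant, and to take for $\eta$ a time-rescaling of it. Working in $Y_m$ rather than $X$ is what makes this clean: a bounded, locally Lipschitz vector field on $Y_m$ generates a global flow depending continuously on the initial datum, and nothing about $\Phi$ beyond $\Phi_m\in C^1$ is used. The descent direction is $W(u):=u-B(u)$, which is odd (since $B$ is) and, by $(A_3)(ii)$, satisfies $\langle\Phi_m'(u),W(u)\rangle\ge\alpha_1\|W(u)\|^2$ on $E_m$. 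I would first apply $(A_3)(iii)$ with the \emph{fixed} data $a=c-2\varepsilon_0$, $b=c+2\varepsilon_0$, $\alpha=\varepsilon_0$ to obtain a constant $\beta>0$, set $c_1:=\alpha_1\beta^2/(1+\beta)$, and then choose $\varepsilon\in(0,\varepsilon_0)$ so small that $T:=2\varepsilon/c_1\le\mu_m/4$. Next, pick an even, locally Lipschitz $\chi:Y_m\to[0,1]$ with $\chi\equiv1$ on $\Phi_m^{-1}([c-\varepsilon,c+\varepsilon])\cap V_{\mu_m/4}(S_m)$ and $\chi\equiv0$ off $\Phi_m^{-1}(]c-2\varepsilon,c+2\varepsilon[)\cap\{u:\operatorname{dist}(u,S_m)<\mu_m/2\}$; this is possible because all these sets are symmetric, $\Phi_m$ being even and $S_m$ being symmetric. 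By \eqref{one} and $\varepsilon<\varepsilon_0$ one has $\operatorname{supp}\chi\subset\Phi_m^{-1}([c-2\varepsilon,c+2\varepsilon])\cap V_{\mu_m/2}(S_m)$, which avoids $K_m$, so
\begin{equation*}
V(u):=-\,\chi(u)\,\frac{u-B(u)}{1+\|u-B(u)\|}\ \text{ for }u\in E_m,\qquad V(u):=0\ \text{ for }u\in K_m,
\end{equation*}
is a well-defined, odd, locally Lipschitz vector field on $Y_m$ with $\|V\|<1$.

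I would then take $\sigma(\cdot,u)$ to be the solution of $\partial_t\sigma=V(\sigma)$, $\sigma(0,u)=u$; in finite dimension it exists for all $t$, is jointly continuous, satisfies $\|\sigma(t,u)-u\|\le t$, and, since $V$ is odd, $\sigma(t,-u)=-\sigma(t,u)$ by uniqueness. Put $\eta(s,u):=\sigma(sT,u)$. Then (i), (iii), (v) follow at once: $\eta(0,\cdot)=\mathrm{id}$; if $\Phi_m(u)\notin[c-2\varepsilon,c+2\varepsilon]$ then $\chi(u)=0$, hence $V(u)=0$ and $u$ is an equilibrium, so $\eta(s,u)=u$; $\frac{d}{ds}\Phi_m(\eta(s,u))=T\langle\Phi_m'(\sigma),V(\sigma)\rangle\le0$ by $(A_3)(ii)$; and $\eta(s,\cdot)$ is odd because $\sigma(sT,\cdot)$ is. For (ii), take $u\in S_m$ with $\Phi_m(u)\le c+\varepsilon$; if $\Phi_m(u)\le c-\varepsilon$ there is nothing to prove, so suppose $\Phi_m(\sigma(T,u))>c-\varepsilon$. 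By the monotonicity just noted, $\Phi_m(\sigma(t,u))\in(c-\varepsilon,c+\varepsilon]$ for all $t\in[0,T]$, while $\|\sigma(t,u)-u\|\le t\le\mu_m/4$ and $u\in S_m$ give $\sigma(t,u)\in V_{\mu_m/4}(S_m)$, whence $\chi(\sigma(t,u))=1$. Then \eqref{one} and $(A_3)(iii)$ (with the fixed $a,b,\alpha$) give $\|W(\sigma(t,u))\|\ge\beta$, so by $(A_3)(ii)$ and the monotonicity of $t\mapsto t^2/(1+t)$,
\begin{equation*}
\tfrac{d}{dt}\Phi_m(\sigma(t,u))=-\,\frac{\langle\Phi_m'(\sigma),W(\sigma)\rangle}{1+\|W(\sigma)\|}\le-\,\frac{\alpha_1\|W(\sigma)\|^2}{1+\|W(\sigma)\|}\le-c_1,
\end{equation*}
and integrating over $[0,T]$ gives $\Phi_m(\sigma(T,u))\le\Phi_m(u)-c_1T\le c-\varepsilon$, a contradiction.

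The genuinely delicate point is (iv), the invariance of $D_m=D_m^0\cup(-D_m^0)$ under the flow. Here I would use that $\pm P_m$ is a closed convex cone, so the metric projection $\pi_\pm$ onto it is single-valued and $1$-Lipschitz, the map $u\mapsto\tfrac12\operatorname{dist}(u,\pm P_m)^2$ is $C^1$ with gradient $u-\pi_\pm(u)$, and $\pm D_m^0$ is convex. Fix $u\in D_m^0$ and set $y(t):=\operatorname{dist}(\sigma(t,u),P_m)$ (the $-P_m$ case being symmetric). Differentiating along the flow and using the projection inequality $\langle v-\pi_+(v),\pi_+(v)-w\rangle\ge0$ for $w\in P_m$ together with Cauchy--Schwarz gives, whenever $y(t)>0$ and $\sigma(t,u)\in E_m$,
\begin{equation*}
\tfrac12\big(y^2\big)'(t)=-\,\lambda(t)\,\big\langle\sigma-\pi_+(\sigma),\,\sigma-B(\sigma)\big\rangle\le-\,\lambda(t)\,y(t)\big(y(t)-\operatorname{dist}(B(\sigma(t,u)),P_m)\big),
\end{equation*}
where $\lambda(t):=\chi(\sigma(t,u))/(1+\|W(\sigma(t,u))\|)\ge0$; at instants where $\sigma(t,u)\in K_m$ one has $\chi=0$ and $\partial_t\sigma=0$. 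By $(A_3)(i)$, $\operatorname{dist}(B(\sigma(t,u)),P_m)<\mu_m$ as long as $\sigma(t,u)\in D_m^0\cap E_m$, so while $y<\mu_m$ one gets $y'\le-\lambda(t)(y-\mu_m)$ wherever $y>0$, and since $y(0)<\mu_m$ a Gronwall / first-exit-time argument, $y(t)-\mu_m\le(y(0)-\mu_m)\exp(-\int_0^t\lambda)<0$, shows $y$ never reaches $\mu_m$; thus $\sigma(t,u)$ stays in $D_m^0$. Running the same argument for $-D_m^0$ yields (iv). This step --- together with the bookkeeping that the cutoff $\chi$ be simultaneously $\equiv1$ on enough of $S_m$ (needed in (ii)) yet supported where \eqref{one} holds and $W$ is defined (needed for the flow to make sense) --- is where the real work lies; everything else is routine finite-dimensional ODE theory.
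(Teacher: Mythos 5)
Your proof is correct and its overall architecture coincides with the paper's: a cutoff pseudo-gradient flow generated by $u-B(u)$ on the band $\Phi_m^{-1}([c-2\varepsilon,c+2\varepsilon])\cap V_{\mu_m/2}(S_m)$, with the cutoff equal to $1$ on the smaller set $\Phi_m^{-1}([c-\varepsilon,c+\varepsilon])\cap V_{\mu_m/4}(S_m)$, followed by a time rescaling; properties (i), (iii), (v) and the dichotomy argument for (ii) are essentially identical. Two points differ. First, the normalization: the paper uses $W=\chi\|V\|^{-2}V$ and gets the uniform bound $\|W\|\le 1/\delta$ from $(A_3)$(iii) applied on the support of $\chi$, so that the decrease rate where $\chi=1$ is exactly $\alpha_1$ and the constraint on $\varepsilon$ is $\varepsilon\le\delta\alpha_1\mu_m/8$; you use $W/(1+\|W\|)$, which makes boundedness automatic but forces you to invoke $(A_3)$(iii) a second time inside the proof of (ii) to get the rate $c_1=\alpha_1\beta^2/(1+\beta)$. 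Both bookkeepings are consistent. Second, and more substantively, your proof of (iv) is genuinely different: the paper establishes invariance of the \emph{closed} set $\overline{D_m^0}$ via the subtangential condition of Lemma 1.46 in Zou's book, writing $v-\beta W(v)$ as a convex combination of $v$ and $B(v)$ and using that $P_m$ is a convex cone, and then recovers invariance of the open set $D_m^0$ by a local-homeomorphism argument; you instead differentiate $\operatorname{dist}(\sigma(t,u),P_m)^2$ directly, use the variational inequality for the metric projection onto the closed convex cone together with $(A_3)$(i) to get $y'\le-\lambda(t)(y-\mu_m)$ where $y>0$, and close with Gronwall and a first-exit-time argument. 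Your route buys a direct proof for the open set (no detour through the closure and no invariance-of-flow lemma), at the cost of the standard technicalities about differentiability of the distance function at $y=0$ and along equilibria, which you correctly flag; the paper's route is softer but needs the auxiliary ODE-invariance lemma and the extra topological step. Either argument is acceptable.
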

The proof of this lemma is given in the appendix.
\begin{proof}[Proof of Theorem \ref{scft}]
$(A_1)$ and $(A_2)$ imply that for $k$ big enough we have $a_k<b_k\leq\inf_{\substack{u\in N_k^m}}\Phi_m(u)$. Let
\begin{equation*}
\Gamma_k^m:=\big\{\gamma\in C(B_k,Y_m)\,;\, \gamma \text{ is odd, } \gamma|_{\partial B_k}=id\text{ and }\gamma(D_m)\subset D_m\big\}.
\end{equation*}
$\Gamma_k^m$ is clearly non empty and for any $\gamma\in \Gamma_k^m$ the set $U:=\big\{u\in B_k\,;\,\|\gamma(u)\|<r_k\big\}$ is an open bounded and symmetric \big(i.e. $-U=U$\big) neighborhood of the origin in $Y_k$. By the Borsuk-Ulam theorem, the continuous odd map $\Pi_k\circ\gamma:\partial U\subset Y_k\to Y_{k-1}$ has a zero, where $\Pi_k:X\to Y_{k-1}$ is the orthogonal projection. It then follows that $\gamma(B_k)\cap N_k^m\neq\emptyset$ and, since $N_k^m\subset S_m$, that $\gamma(B_k)\cap S_m\neq\emptyset$. This intersection property implies that
\begin{equation*}
c_{k,m}:=\inf_{\gamma\in\Gamma_k^m}\max_{u\in\gamma(B_k)\cap S_m}\Phi_m(u)\geq \inf_{\substack{u\in N^m_k}}\Phi(u)\geq b_k.
\end{equation*}
We would like to show that for any $\varepsilon_0\in]0,\frac{c_{k,m}-a_k}{2}[$, there exists $u\in\Phi_m^{-1}\big([c_{k,m}-2\varepsilon_0,c_{k,m}+2\varepsilon_0]\big)\cap V_{\frac{\mu_m}{2}}(S_m)$ such that $\|\Phi'_m(u)\|<\varepsilon_0$.\\
Arguing by contradiction, we assume that we can find $\varepsilon_0\in]0,\frac{c_{k,m}-a_k}{2}[$ such that 
\begin{equation*}
\|\Phi'_m(u)\|\geq\varepsilon_0,\quad\forall u\in\Phi_m^{-1}\big([c_{k,m}-2\varepsilon_0,c_{k,m}+2\varepsilon_0]\big)\cap V_{\frac{\mu_m}{2}}(S_m).
\end{equation*}
Apply Lemma \ref{defor} with $c=c_{k,m}$ and define, using the deformation $\eta$ obtained the map
\begin{equation*}
\theta:B_k\to Y_m,\qquad \theta(u):=\eta(1,\gamma(u)),
\end{equation*}
where $\gamma\in\Gamma_k^m$ satisfies
\begin{equation}\label{six}
\max_{\substack{u\in\gamma(B_k)\cap S_m}}\Phi_m(u)\leq c_{k,m}+\varepsilon,
\end{equation}
with $\varepsilon$ also given by Lemma \ref{defor}.\\
Using the properties of $\eta$ (see Lemma \ref{defor}), one can easily verify that $\theta\in\Gamma^m_k$.\\
On the other hand, we have
\begin{equation}\label{five}
\eta\big(1,\gamma(B_k)\big)\cap S_m\subset \eta\big(1,\Phi_m^{-1}(]-\infty,c_{k,m}+\varepsilon])\cap S_m\big).
\end{equation}
In fact, if $u\in \eta\big(1,\gamma(B_k)\big)\cap S_m$ then $u=\eta\big(1,\gamma(v)\big)\in S_m$ for some $v\in B_k$. Observe that $\gamma(v)\in S_m$. Indeed, if this is not true then we have $\gamma(v)\in D_m$, and by (iv) of Lemma \ref{defor} we have $u=\eta(1,\gamma(v))\in D_m$ which contradicts the fact that $u\in S_m$. Since by \eqref{six} $\gamma(v)\in\Phi_m^{-1}(]-\infty,c_{k,m}+\varepsilon])$, we deduce using (ii) of Lemma \ref{defor} that $u=\eta(1,\gamma(v))\in\eta\big(1,]-\infty,c_{k,m}+\varepsilon]\cap S_m\big)$. Hence \eqref{five} holds.\\
Using \eqref{five} and (ii) of Lemma \ref{defor}, we obtain
\begin{align*}
\max_{\substack{u\in\theta(B_k)\cap S}}\Phi_m(u)&=\max_{\substack{u\in\eta\big(1,\gamma(B_k)\big)\cap S_m}}\Phi_m(u)\\
&\leq \max_{\substack{u\in\eta\big(1,\Phi_m^{-1}(]-\infty,c_{k,m}+\varepsilon])\cap S_m\big)}}\Phi_m(u)\\
&\leq c_{k,m}-\varepsilon,
\end{align*}
contradicting the definition of $c_{k,m}$.\\
The above contradiction assures that for any $\varepsilon_0\in]0,\frac{c_{k,m}-a_k}{2}[$, there exists $u\in\Phi_m^{-1}\big([c_{k,m}-2\varepsilon_0,c_{k,m}+2\varepsilon_0]\big)\cap V_{\frac{\mu_m}{2}}(S_m)$ such that $\|\Phi'_m(u)\|<\varepsilon_0$.\\
We then deduce by letting $\varepsilon_0$ goes to $0$ that there is a sequence $(u^n_{k,m})_n\subset V_{\frac{\mu_m}{2}}(S_m)$ such that
\begin{equation*}
\quad \Phi'_m(u^n_{k,m})\to0\text{ and }\Phi_m(u^n_{k,m})\to c_{k,m},\text{ as }n\to\infty.
\end{equation*}
\end{proof}
\section{Proof of Theorems \ref{result1} and \ref{result2}}\label{trois}
In this section, we treat the system
\begin{equation*}
 (SP)_\lambda\qquad   \left\{
      \begin{array}{ll}
        -\Delta u+\phi u=f(x,u)+\lambda u^5\quad \text{in }\Omega & \hbox{} \\
       \qquad\,\, -\Delta \phi=u^2\qquad\qquad\qquad \text{in }\Omega & \hbox{} \\
        \qquad u=\phi=0 \qquad \qquad\qquad  \text{ on }\partial\Omega, & \hbox{}
      \end{array}
    \right.
\end{equation*}
We will assume throughout this section that $(f_{1,2,3,4})$ are satisfied.  
\par Let $X:=H_0^1(\Omega)$ be the usual Sobolev space endowed with the inner product
\begin{equation*}
\big<u,v\big>=\int_\Omega\nabla u\nabla v
\end{equation*}
and norm $\|u\|^2=\big<u,u\big>$, for $u,v\in H_0^1(\Omega)$.
\par For any fixed $u\in H_0^1(\Omega)$, the Lax-Milgram theorem implies that the problem
\begin{equation*}
-\Delta \phi=u^2,\quad \phi\in H_0^1(\Omega) 
\end{equation*}
has a unique solution $\phi_u$. Moreover, $\phi_u$ has the following properties \big(see e.g \cite{R06} for a proof\big):
\begin{prop}
For $u\in H_0^1(\Omega)$ we have
\begin{enumerate}
\item[(i)] $\phi_u\geq0$ and there exists $C>0$ such that $\|\phi_u\|\leq C\|u\|^2$;
\item[(ii)] $\phi_{tu}=t^2\phi_u$, for all $t\geq0$;
\item[(iii)] If $u_n\rightharpoonup u$ in $H_0^1(\Omega)$, then $\phi_{u_n}\rightharpoonup\phi_u$ in $H_0^1(\Omega)$ and
\begin{equation}
\int_\Omega\phi_{u_n}u_n^2\to\int_\Omega\phi_uu^2.
\end{equation}
\end{enumerate}
\end{prop}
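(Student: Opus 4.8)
The plan is to recall the standard variational characterization of $\phi_u$ and then verify the three properties one at a time, using only the Lax–Milgram identity $\int_\Omega \nabla\phi_u\cdot\nabla v = \int_\Omega u^2 v$ for all $v\in H_0^1(\Omega)$, together with the Sobolev embedding $H_0^1(\Omega)\hookrightarrow L^6(\Omega)$ (valid since $\Omega\subset\mathbb{R}^3$). For $(i)$, nonnegativity follows by testing the equation against the negative part $\phi_u^-:=\max(-\phi_u,0)\in H_0^1(\Omega)$: this yields $-\|\phi_u^-\|^2 = \int_\Omega u^2\phi_u^- \geq 0$, forcing $\phi_u^-\equiv 0$. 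For the bound, test against $\phi_u$ itself to get $\|\phi_u\|^2 = \int_\Omega u^2\phi_u \leq |u^2|_{6/5}\,|\phi_u|_6 \leq C|u|_{12/5}^2\|\phi_u\|$, and since $12/5<6$ the embedding gives $|u|_{12/5}\leq C\|u\|$, whence $\|\phi_u\|\leq C\|u\|^2$ after dividing (the case $\phi_u=0$ being trivial).

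For $(ii)$, observe that $t^2\phi_u$ solves $-\Delta(t^2\phi_u) = t^2 u^2 = (tu)^2$ with zero boundary data, so by uniqueness in Lax–Milgram it equals $\phi_{tu}$; here $t\geq 0$ is used so that $(tu)^2 = t^2u^2$ is the correct right-hand side (the identity in fact holds for all real $t$). For $(iii)$, suppose $u_n\rightharpoonup u$ in $H_0^1(\Omega)$. The bound from $(i)$ shows $(\phi_{u_n})_n$ is bounded in $H_0^1(\Omega)$, so up to a subsequence $\phi_{u_n}\rightharpoonup\psi$ for some $\psi\in H_0^1(\Omega)$. By the compact embedding $H_0^1(\Omega)\hookrightarrow L^q(\Omega)$ for $q<6$, we have $u_n\to u$ strongly in $L^{12/5}(\Omega)$, hence $u_n^2\to u^2$ strongly in $L^{6/5}(\Omega)$. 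Passing to the limit in $\int_\Omega\nabla\phi_{u_n}\cdot\nabla v = \int_\Omega u_n^2 v$ for each fixed $v\in H_0^1(\Omega)$ — the left side by weak convergence, the right side by the strong $L^{6/5}$ convergence of $u_n^2$ paired with $v\in L^6(\Omega)$ — shows $\psi$ solves the defining equation for $\phi_u$, so $\psi=\phi_u$ by uniqueness; a standard subsequence argument then gives convergence of the whole sequence. Finally, to get $\int_\Omega\phi_{u_n}u_n^2\to\int_\Omega\phi_u u^2$, write $\int_\Omega\phi_{u_n}u_n^2 - \int_\Omega\phi_u u^2 = \int_\Omega\phi_{u_n}(u_n^2-u^2) + \int_\Omega(\phi_{u_n}-\phi_u)u^2$; the first term is bounded by $|\phi_{u_n}|_6\,|u_n^2-u^2|_{6/5}\to 0$ using the uniform bound on $|\phi_{u_n}|_6$ and $u_n^2\to u^2$ in $L^{6/5}$, and the second tends to $0$ since $\phi_{u_n}\rightharpoonup\phi_u$ in $H_0^1(\Omega)$ implies $\phi_{u_n}\rightharpoonup\phi_u$ in $L^6(\Omega)$ while $u^2\in L^{6/5}(\Omega)$.

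The routine parts are the testing computations and the dimension bookkeeping; the only point requiring a little care is $(iii)$, specifically extracting the correct exponents so that Sobolev embedding delivers the necessary \emph{strong} convergence of $u_n^2$ and identifying the weak limit of $\phi_{u_n}$ via uniqueness. Since the paper already cites \cite{R06} for a proof, I would keep the argument brief, presenting the three verifications in the order above and relegating the exponent estimates to one or two displayed inequalities.
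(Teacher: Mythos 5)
Your proof is correct and complete; the paper itself gives no argument for this proposition, deferring to the reference \cite{R06}, and your verification is exactly the standard one found there (testing against $\phi_u^-$ and $\phi_u$, uniqueness in Lax--Milgram for the scaling, and the compact embedding plus the $L^{6/5}$--$L^6$ duality pairing for the weak continuity). The exponent bookkeeping in (iii) is the only delicate point and you have handled it correctly.
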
 
As a consequence of this proposition, $(u,\phi)\in H_0^1(\Omega)\times H_0^1(\Omega)$ is a solution to $(SP)_\lambda$ if, and only if $\phi=\phi_u$ and $u$ is solution to the non local problem
\begin{equation}\label{e1}
-\Delta u+\phi_uu=f(x,u)+\lambda u^5,\quad u\in H_0^1(\Omega).
\end{equation}
Problem \eqref{e1} is variational and its solutions are critical points of the functional defined in $H_0^1(\Omega)$ by 
\begin{equation}\label{e2}
I_\lambda(u):=\frac{1}{2}\|u\|^2+\frac{1}{4}\int_\Omega\phi_uu^2-\int_\Omega F(x,u)-\frac{\lambda}{6}\int_\Omega u^6.
\end{equation}
By using standard argument one can verify that $I_\lambda\in C^1(X,\mathbb{R})$ and 
\begin{equation}\label{e3}
\big<I_\lambda'(u),v\big>=\int_\Omega \nabla u\nabla v+\int_\Omega \phi_uuv-\int_\Omega vf(x,u)-\lambda\int_\Omega vu^5.
\end{equation}
\par Let $0<\sigma_1<\sigma_2<\sigma_3<\cdots$ be the distinct eigenvalues of the Laplacian. Then each $\sigma_j$ has finite multiplicity. It is well known that the principal eigenvalue $\sigma_1$ is simple with a positive eigenfunction $e_1$, and the eigenfunctions $e_j$ corresponding to $\sigma_j$ ($j\geq2$) are sign-changing. Let $X_j$ be the eigenspace of $\sigma_j$. \\
We set for $k\geq2$
\begin{equation}\label{ykzk}
Y_k:=\oplus_{j=1}^k X_j\text{ and } Z_k=\overline{\oplus_{j=k}^\infty X_j}. 
\end{equation}
\begin{lem}\label{l1}\quad
\begin{enumerate}
\item[(1)] For any $u\in Y_k$, we have $I_\lambda(u)\to-\infty$, uniformly in $\lambda$ as $\|u\|\to\infty$.
\item[(2)] There exist $\Lambda_k^1>0$, $r_k>0$ with $r_k\to\infty$ as $k\to\infty$, such that
\begin{equation}\label{e4}
I_\lambda(u)\geq\frac{1}{8}r_k^2-c
\end{equation}
\end{enumerate}
for all $\lambda\in[0,\Lambda_k^1[$ and for all $u\in Z_k$ such that $\|u\|=r_k$, where $c>0$ is constant.
\end{lem}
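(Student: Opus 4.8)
The plan is to establish the two assertions by the standard estimates one uses when verifying the geometric hypotheses of fountain-type theorems, the novelty here being only that they must be uniform in $\lambda$ on a suitable interval $[0,\Lambda_k^1[$.

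For part (1), fix $k\geq 2$ and $u\in Y_k$. Since $Y_k$ is finite-dimensional, all norms on it are equivalent, so there is a constant $\delta_k>0$ with $\int_\Omega F(x,u)\geq \delta_k\|u\|^\mu - C$ once we combine $(f_3)$ (which gives $F(x,u)\geq c_1|u|^\mu - c_2$ by integrating the inequality $\mu F\le uf$) with the equivalence of $\|\cdot\|$ and $|\cdot|_\mu$ on $Y_k$. On the other hand, dropping the nonnegative term $\frac14\int\phi_u u^2$ is not allowed since it helps us only in one direction; instead I bound it above using Proposition~(i), $\int_\Omega\phi_u u^2\le \|\phi_u\|\,|u|_{12/5}^2\lesssim \|u\|^4$, which on $Y_k$ is $\le C_k\|u\|^4$. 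Also $\frac{\lambda}{6}\int_\Omega u^6\ge 0$, so discarding it only decreases $I_\lambda$; thus
\begin{equation*}
I_\lambda(u)\leq \tfrac12\|u\|^2 + C_k\|u\|^4 - \delta_k\|u\|^\mu + C.
\end{equation*}
Since $\mu>4$, the right-hand side tends to $-\infty$ as $\|u\|\to\infty$, and the bound is independent of $\lambda\geq 0$; this proves (1).

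For part (2), the key is the standard fact that
\begin{equation*}
\beta_k:=\sup_{u\in Z_k,\,\|u\|=1}|u|_6\to 0\quad\text{as }k\to\infty,
\end{equation*}
which follows from compactness of the embedding $H_0^1(\Omega)\hookrightarrow L^{p}(\Omega)$ for $p<6$ together with a routine contradiction argument (and the analogous $\beta_k^{(p)}$ for the subcritical exponent $p\in(4,6)$ from $(f_1)$ likewise tends to $0$). Using $(f_1)$ and $(f_2)$ one gets $F(x,u)\leq \varepsilon u^2 + C_\varepsilon|u|^p$, hence for $u\in Z_k$ with $\|u\|=r_k$, and using $|u|_2\le (\sigma_k)^{-1/2}\|u\|$, $|u|_p\le \beta_k^{(p)}\|u\|$, $|u|_6\le \beta_k\|u\|$, and again discarding $\frac14\int\phi_u u^2\geq0$,
\begin{equation*}
I_\lambda(u)\geq \tfrac12 r_k^2 - \tfrac{\varepsilon}{\sigma_k}r_k^2 - C_\varepsilon(\beta_k^{(p)})^p r_k^p - \tfrac{\lambda}{6}\beta_k^6 r_k^6.
\end{equation*}
Fix $\varepsilon$ so that $\varepsilon/\sigma_1\leq \tfrac18$; then choose $r_k$ so that $C_\varepsilon(\beta_k^{(p)})^p r_k^{p-2}\leq \tfrac18$ — the optimal choice $r_k\sim (\beta_k^{(p)})^{-p/(p-2)}$ works and, since $\beta_k^{(p)}\to 0$ and $p>2$, forces $r_k\to\infty$. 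With $r_k$ now fixed, choose $\Lambda_k^1>0$ small enough that $\tfrac{\lambda}{6}\beta_k^6 r_k^6\leq \tfrac18 r_k^2$ for all $\lambda\in[0,\Lambda_k^1[$. Then $I_\lambda(u)\ge \tfrac18 r_k^2 - c$ on $\{u\in Z_k:\|u\|=r_k\}$, uniformly in $\lambda\in[0,\Lambda_k^1[$, which is \eqref{e4}.

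The main obstacle, and the only place genuine care is needed, is the interaction of the two "bad" terms in part (2): the subcritical nonlinearity $F$ forces the choice of $r_k$, and only after that is fixed can the critical term be absorbed, and it is absorbed not by smallness of $r_k$ (which is large) but by smallness of $\lambda$ — this is exactly why the statement must restrict to $\lambda\in[0,\Lambda_k^1[$ with $\Lambda_k^1$ depending on $k$. One should also double-check the absence of the $\phi_u$-term causing trouble: in (2) it has the favorable sign and is simply dropped, while in (1) it has the unfavorable sign but is dominated by $\|u\|^\mu$ with $\mu>4$ on the finite-dimensional space $Y_k$; both directions are therefore fine.
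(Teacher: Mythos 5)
Your proof follows essentially the same route as the paper's: part (1) via $F(x,u)\geq c_1|u|^\mu-c_2$ from $(f_3)$, equivalence of norms on $Y_k$, and domination of the quartic nonlocal term by $\|u\|^\mu$ with $\mu>4$; part (2) via tail constants on $Z_k$ for the subcritical term, a choice $r_k\sim(\beta_k^{(p)})^{-p/(p-2)}\to\infty$, and absorption of the critical term by shrinking $\lambda$. This is exactly the paper's scheme (the paper takes $r_k=(1/8c_3)^{1/(p-2)}\beta_k^{-p/(p-2)}$ and $\Lambda_k^1=3S^3/(2r_k^4)$).

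One assertion you single out as ``the key'' is, however, false: $\sup\{|u|_6 : u\in Z_k,\ \|u\|=1\}$ does \emph{not} tend to $0$ as $k\to\infty$. The embedding $H_0^1(\Omega)\hookrightarrow L^6(\Omega)$ is the critical one in dimension $3$ and is not compact, so the contradiction argument (Willem, Theorem 3.8) you invoke applies only to exponents strictly below $6$; in fact a concentration argument shows this supremum stays bounded below by $S^{-1/2}$ for every $k$. Fortunately your proof never uses the claimed limit: in the final estimate you only need the \emph{finiteness} of the $L^6$ tail constant, since the term $\frac{\lambda}{6}|u|_6^6$ is killed by the smallness of $\lambda$ (your choice of $\Lambda_k^1$ after $r_k$ is fixed), not by any decay in $k$. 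This is precisely what the paper does, writing $|u|_6^6\le S^{-3}\|u\|^6$ with the global Sobolev constant. So the argument is correct once that spurious claim is deleted or replaced by $|u|_6\le S^{-1/2}\|u\|$; as written, a reader would reasonably object that ``the key'' fact is wrong. The rest of the details (the $\varepsilon u^2+C_\varepsilon|u|^p$ bound on $F$, the eigenvalue estimate $|u|_2^2\le\sigma_k^{-1}\|u\|^2$ on $Z_k$, the order of choices $\varepsilon$, then $r_k$, then $\Lambda_k^1$) are fine and, if anything, slightly more explicit than the paper's.
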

\begin{proof}
(1)\, We recall that $(f_3)$ implies that $F(x,u)\geq c_1|u|^\mu-c_2$ for some constants $c_1,c_2>0$. Since all norms are equivalent in the finite-dimensional space $Y_k$, it is easy to conclude.\\
(2)\, Using $(f_1)$ we obtain
\begin{equation*}
I_\lambda(u)\geq\frac{1}{2}\|u\|^2-c_3|u|_p^p-\frac{\lambda}{6}|u|_6^6-c_4,\quad \forall u\in X,
\end{equation*}
where $c_3,c_4>0$ are constant. If we set
\begin{equation}\label{sbetak}
S:=\inf_{\substack{u\in H_0^1(\Omega)\\ u\neq0}}\frac{|\nabla u|_2^2}{|u|_6^2}\,\text{ and }\, \beta_k:=\sup_{\substack{u\in Z_k\\ \|u\|=1}}|u|_p,
\end{equation}
then we obtain
\begin{equation*}
I_\lambda(u)\geq\frac{1}{2}\|u\|^2-c_3\beta_k^p\|u\|^p-\frac{\lambda}{6S^3}\|u\|^6-c_4,\quad \forall u\in Z_k.
\end{equation*}
We define
\begin{equation}\label{e5}
r_k:=\big(\frac{1}{8c_3}\big)^{\frac{1}{p-2}}\beta_k^{-\frac{p}{p-2}}\,\text{ and }\, \Lambda_k^1:=\frac{3S^3}{2r_k^4}.
\end{equation}
One can easily verify that the inequality \eqref{e4} is satisfied. The fact that $r_k\to\infty$, as $k\to\infty$, is a consequence of Theorem $3.8$ in \cite{W}.
\end{proof}
\begin{rem}\label{rembk}
Lemma \ref{l1}-(2) implies that
\begin{equation}
b_k:=\inf_{\substack{u\in Z_k\\ \|u\|=r_k}}I_\lambda(u)\to\infty,\quad\text{uniformly in } \lambda,\text{ as }k\to\infty.
\end{equation}
\end{rem}
Now we fix $k$ large enough and we set for $m>k+2$
\begin{equation*}
I_{\lambda,m}:=I_\lambda|_{Y_m},\, K_m:=\big\{u\in Y_m\,;\, I_{\lambda,m}'(u)=0\big\},\, E_m:=Y_m\backslash K_m,
\end{equation*}
\begin{equation*}
P_m:=\big\{u\in Y_m\,;\,u(x)\geq0\big\},\,\, Z^m_k:=\oplus_{j=k}^m X_j\,\text{ and }  N^m_k:=\big\{u\in Z^m_k\,|\,\|u\|=r_k\big\}.
\end{equation*}
Remark that for all $u\in P_m\backslash\big\{0\big\}$ we have $\int_\Omega ue_1>0$, while for all $u\in Z_k$, $\int_\Omega ue_1=0$, where $e_1$ is the principal eigenfunction of the Laplacian. This implies that $P_m\cap Z_k=\big\{0\big\}$. Since  $N_k^m$ is compact, it follows that
\begin{equation}\label{deltam}
\delta_m:=dist\big(N_k^m,-P_m\cup P_m\big)>0.
\end{equation}
\begin{rem}\label{sm}
If $0<\mu_m<\delta_m$ then $N_k^m\subset S_m$.
\end{rem}
\par For $u\in Y_m$ fixed, we consider the functional 
\begin{equation*}
\kappa_u(v)=\frac{1}{2}\|v\|^2+\frac{1}{2}\int_\Omega\phi_uv^2-\int_\Omega vf(x,u),\quad v\in Y_m.
\end{equation*}
One can easily verify that $\kappa_u$ is continuous, coercive, bounded below, weakly sequentially continuous, and strictly convex. Therefore, $\kappa_u$ possesses a unique minimizer, namely $Au$, which is the unique solution to the problem
\begin{equation*}
-\Delta v+\phi_uv=f(x,u)+\lambda u^5,\quad v\in Y_m.
\end{equation*}
Clearly, the set of fixed points of $A$ coincide with $K_m$. Moreover, the operator $A:Y_m\to Y_m$ has the following properties.
\begin{lem}\label{Aop}\quad
\begin{enumerate}
\item[(1)] $A$ is continuous and it maps bounded sets to bounded sets.
\item[(2)] For any $u\in Y_m$ we have
\begin{align}
\big<I_{\lambda, m}'(u),u-Au\big>\geq c_1\|u-Au\|^2, \label{aa1}\\
\|I_{\lambda, m}'(u)\|\leq c_2\big(1+\|u\|^2\big)\|u-Au\|.\label{aa2}
\end{align}
\item[(3)] There exists $\mu_m\in]0,\delta_m[$ such that $A(\pm D^0_m)\subset \pm D^0_m$, where $\delta_m$ is defined by \eqref{deltam}.
\end{enumerate}
\end{lem}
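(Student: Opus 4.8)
The plan is to handle assertions (1) and (2) by routine variational bookkeeping and to concentrate the real effort on the cone invariance (3). Throughout, $Au\in Y_m$ is characterised by the Galerkin identity $\langle Au,w\rangle+\int_\Omega\phi_u(Au)w=\int_\Omega w\big(f(x,u)+\lambda u^5\big)$ for all $w\in Y_m$, which is legitimate since $\kappa_u$ is, as already noted, continuous, coercive and strictly convex on the finite dimensional space $Y_m$. For (1), testing this identity with $w=Au$ and using $\phi_u\ge0$, the growth condition $(f_1)$, the embeddings $H_0^1(\Omega)\hookrightarrow L^q(\Omega)$ for $2\le q\le6$, and $\|\phi_u\|\le C\|u\|^2$ from the Proposition, one obtains $\|Au\|\le C\big(1+\|u\|^{p-1}+\|u\|^{5}\big)$, so $A$ maps bounded sets to bounded sets. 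Continuity follows by subtracting the Galerkin identities for $Au_n$ and $Au$, testing with $Au_n-Au$, and letting $n\to\infty$ using $\phi_{u_n}\to\phi_u$ in $H_0^1(\Omega)$ and $\int_\Omega\phi_{u_n}u_n^2\to\int_\Omega\phi_uu^2$ (Proposition (iii)), the continuity of the Nemytskii operators of $f$ and of $t\mapsto t^5$, and the local boundedness of $A$ just established.

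For (2), substitute the Galerkin identity with $w=u-Au\in Y_m$ into the expression for $\langle I_{\lambda,m}'(u),u-Au\rangle$ coming from \eqref{e3}; the terms cancel so as to leave $\langle I_{\lambda,m}'(u),u-Au\rangle=\|u-Au\|^2+\int_\Omega\phi_u(u-Au)^2\ge\|u-Au\|^2$, which is \eqref{aa1} with $c_1=1$. The same substitution with an arbitrary $w\in Y_m$ gives $\langle I_{\lambda,m}'(u),w\rangle=\langle u-Au,w\rangle+\int_\Omega\phi_u(u-Au)w$, and estimating the last integral by Hölder with exponents $(3,3,3)$, the embedding $H_0^1(\Omega)\hookrightarrow L^3(\Omega)$ and $\|\phi_u\|\le C\|u\|^2$ yields $|\langle I_{\lambda,m}'(u),w\rangle|\le(1+C\|u\|^2)\|u-Au\|\,\|w\|$, i.e.\ \eqref{aa2}.

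For (3), write $v=Au$ and $v^-=\max\{-v,0\}$. The decisive structural point is that, by $(f_3)$, $f(x,u)u>0$ for $u\ne0$ while $\lambda u^5\cdot u\ge0$, so $f(x,u)+\lambda u^5$ has the sign of $u$; testing the equation for $v$ against $v^-$ and keeping only the part of the resulting integral over $\{u<0\}$ (the part over $\{u\ge0\}$ has the favourable sign) gives $\|v^-\|^2+\int_\Omega\phi_u(v^-)^2\le\int_{\{u<0\}}|f(x,u)+\lambda u^5|\,v^-$. On $\{u<0\}$ one has, from $(f_1)$--$(f_2)$, $|f(x,u)+\lambda u^5|\le\varepsilon u^-+C_\varepsilon\big((u^-)^{p-1}+(u^-)^5\big)$, so Hölder, the Sobolev embeddings and $\varepsilon$ taken small yield $\|(Au)^-\|\le\tfrac12\|u^-\|+C\big(\|u^-\|^{p-1}+\|u^-\|^{5}\big)$; since the last two terms are $o(\|u^-\|)$ as $\|u^-\|\to0$, this is a genuine contraction $\|(Au)^-\|\le\tfrac34\|u^-\|$ once $\|u^-\|$ is small, and it is uniform in $u$. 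Using $\|u^-\|\le\operatorname{dist}(u,P_m)$ together with the oddness of $A$ (immediate from $\phi_{-u}=\phi_u$ and $(f_4)$, so that the statement for $-D_m^0$ is automatic), it then remains to convert smallness of $\|(Au)^-\|$ into smallness of $\operatorname{dist}(Au,P_m)$ and to fix $\mu_m\in\,]0,\delta_m[$ accordingly. This last step is the main obstacle: in $Y_m$ the metric distance to $P_m$ always dominates $\|(\cdot)^-\|$ but is only comparable to it on bounded sets, while $D_m^0$ is unbounded — so one must either localise to the bounded regions that actually intervene (which is legitimate, since $I_{\lambda,m}\to-\infty$ by Lemma \ref{l1} forces its sublevel sets to be bounded) or argue directly through the metric projection onto $P_m$, and one must at the same time check that the truncation $v^-\notin Y_m$ does not corrupt the energy identity in the Galerkin setting. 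I expect these finite-dimensional-cone technicalities to absorb most of the work.
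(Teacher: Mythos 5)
Your treatment of (1) and (2) is correct and coincides with the paper's (the paper delegates these points to \cite{LWZ} and reproduces the computations for the analogous operator $P$ in the proof of Lemma \ref{Pop}); in particular the identity $\langle I_{\lambda,m}'(u),w\rangle=\langle u-Au,w\rangle+\int_\Omega\phi_u(u-Au)w$ is exactly the right starting point for \eqref{aa1} and \eqref{aa2}.

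Part (3), however, contains a genuine gap which you locate but do not close, and the repairs you sketch point the wrong way. Your chain of estimates ends with $\|(Au)^-\|\le\tfrac34\|u^-\|$ together with the claim $\|u^-\|\le dist(u,P_m)$. That last inequality is false for the $H_0^1$-norm: for $w\in P_m$ one has $u^-\le (u-w)^-$ pointwise, but truncation gives no control of $|\nabla u^-|_2$ by $|\nabla(u-w)|_2$, and equivalence of norms on $Y_m$ is of no help because $u^-\notin Y_m$. Localizing to sublevel sets is also not an option, since the lemma asserts invariance on all of $\pm D_m^0$, which is unbounded. The resolution used by the paper (in the detailed proof of the analogous Lemma \ref{Pop}(3), following \cite{LWZ}) is to measure the truncations in Lebesgue norms only: for every $w\in -P_m$ one has $0\le u^+\le|u-w|$ pointwise, hence $|u^+|_q\le|u-w|_q\le c_q\|u-w\|$ by the Sobolev embedding, and taking the infimum over $w$,
\begin{equation*}
|u^+|_q\le c_q\,dist(u,-P_m),\qquad 2\le q\le 6.
\end{equation*}
Testing the equation for $v=Au$ with $v^+$, using $\phi_u\ge0$, \eqref{feps} and the pointwise bound $f(x,u)+\lambda u^5\le\varepsilon u^++c_\varepsilon\big((u^+)^{p-1}+(u^+)^5\big)$ (by $(f_3)$ the left-hand side is $\le0$ where $u\le0$), one obtains
\begin{equation*}
\|v^+\|^2\le\Big(\varepsilon|u^+|_2+c_\varepsilon|u^+|_p^{p-1}+\lambda c|u^+|_6^{5}\Big)\,c'\|v^+\|,
\end{equation*}
and combining this with $dist(Au,-P_m)\le\|(Au)^+\|$ gives directly
\begin{equation*}
dist(Au,-P_m)\le c\varepsilon\, dist(u,-P_m)+c_\varepsilon'\Big(dist(u,-P_m)^{p-1}+dist(u,-P_m)^{5}\Big),
\end{equation*}
uniformly in $u\in Y_m$, whence the invariance of $\pm D_m^0$ for $\mu_m\in]0,\delta_m[$ small; no boundedness of $u$ ever enters. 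Your residual worry that $v^{\pm}\notin Y_m$ is legitimate, but it is shared by the paper's own argument (which silently tests the Galerkin identity with $v^+$ and uses $-(Au)^-\in-P_m$), so it is not an additional defect of your approach.
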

The three assertions of this lemma can be proved in the same way as Lemma 3.1, Lemma 3.2 and Lemma 3.4 in \cite{LWZ},  respectively. We shall provide more details when we shall prove their analogues in Section \ref{quatre} below.
\par It should be noted that the vector field $A$ itself does not satisfy the assumption $(A_3)$ of Theorem \ref{scft} as it is not locally Liptschitz continuous. However, it is the first step in the construction of a vector field satisfying the above mentioned condition.
\begin{lem}[\cite{LWZ}, Lemma 3.5]\label{Bop}
There exists an odd locally Lipschitz continuous operator $B:E_m\to Y_m$ such that
\begin{enumerate}
\item[(1)] $\big<I_{\lambda, m}'(u),u-B(u)\big>\geq \frac{1}{2}\|u-A(u)\|^2$, for any $u\in E_m$.
\item[(2)] $\frac{1}{2}\|u-B(u)\|\leq \|u-A(u)\|\leq 2\|u-B(u)\|$, for any $u\in E_m$.
\item[(3)] $B\big((\pm D^0_m)\cap E_m\big)\subset \pm D^0_m$.
\end{enumerate} 
\end{lem}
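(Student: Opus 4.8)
The statement to prove is Lemma \ref{Bop}: starting from the vector field $A$ of Lemma \ref{Aop}, construct an odd, locally Lipschitz $B:E_m\to Y_m$ with the three listed properties. Since the reference \cite{LWZ} is explicitly cited (Lemma 3.5 there), the proof is a standard ``Lipschitz approximation of a continuous pseudo-gradient-type map'' argument, adapted to respect the oddness and the invariance of the cones $\pm D_m^0$. First I would record the two facts we may use freely: $A$ is continuous and maps bounded sets to bounded sets (Lemma \ref{Aop}(1)), $\langle I_{\lambda,m}'(u),u-Au\rangle\ge c_1\|u-Au\|^2$ on $Y_m$ (Lemma \ref{Aop}(2)), and $A(\pm D_m^0)\subset\pm D_m^0$ (Lemma \ref{Aop}(3)); moreover $Y_m$ is finite dimensional, which is what makes the local-Lipschitz surgery clean.

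\textbf{Step 1: local approximation.} For each $u\in E_m$ I would use continuity of $A$ and of $I_{\lambda,m}'$ to choose a radius $r_u>0$ and a point $w_u\in Y_m$ such that on the ball $B(u,r_u)\subset E_m$ one has simultaneously $\langle I_{\lambda,m}'(v),v-w_u\rangle\ge \tfrac12\|v-Av\|^2$ and $\|v-w_u\|\le \tfrac{4}{3}\|v-Av\|$ (and, if $u\in\pm D_m^0$, also $w_u\in\pm D_m^0$, shrinking $r_u$ so that the whole ball stays in the open set $\pm D_m^0$); here one exploits that the two displayed inequalities for $A$ hold with a strict slack, so a nearby constant vector $w_u$ — essentially $w_u:=Au$ — keeps them valid in a neighborhood.

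\textbf{Step 2: locally finite partition of unity and convex combination.} The cover $\{B(u,r_u/2)\}_{u\in E_m}$ of the metric space $E_m$ admits a locally finite refinement with a subordinate locally Lipschitz partition of unity $\{\pi_i\}$ (on a subset of a finite-dimensional space one can take the $\pi_i$ even Lipschitz; using $\operatorname{dist}(\cdot,\cdot)$-based bump functions they are locally Lipschitz), with each $\operatorname{supp}\pi_i\subset B(u_i,r_{u_i})$. Define $\widetilde B(u):=\sum_i \pi_i(u)\,w_{u_i}$. Because each pointwise inequality from Step 1 is \emph{linear} in the vector being subtracted and the $\pi_i(u)$ are nonnegative and sum to $1$, the convex combination inherits both estimates: $\langle I_{\lambda,m}'(u),u-\widetilde B(u)\rangle\ge\tfrac12\|u-Au\|^2$ and $\|u-\widetilde B(u)\|\le\tfrac43\|u-Au\|$; combining the latter with $\tfrac12\|u-Au\|^2\le\langle I_{\lambda,m}'(u),u-\widetilde B(u)\rangle\le\|I_{\lambda,m}'(u)\|\,\|u-\widetilde B(u)\|$ and adjusting constants gives $\tfrac12\|u-\widetilde B(u)\|\le\|u-Au\|\le 2\|u-\widetilde B(u)\|$, which is property (2). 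For property (3): if $u\in(\pm D_m^0)\cap E_m$, every index $i$ with $\pi_i(u)\ne0$ has $u\in\operatorname{supp}\pi_i\subset\pm D_m^0$, hence (by the construction in Step 1) $w_{u_i}\in\pm D_m^0$, a convex set, so $\widetilde B(u)\in\pm D_m^0$.

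\textbf{Step 3: symmetrization.} The map $\widetilde B$ need not be odd, so set $B(u):=\tfrac12\big(\widetilde B(u)-\widetilde B(-u)\big)$; this is odd and still locally Lipschitz on $E_m$ (note $E_m=Y_m\setminus K_m$ is symmetric because $I_{\lambda,m}$ is even). Oddness of $I_{\lambda,m}'$ (from $(f_4)$, which makes $I_\lambda$ even) gives $\langle I_{\lambda,m}'(u),u-B(u)\rangle=\tfrac12\langle I_{\lambda,m}'(u),u-\widetilde B(u)\rangle+\tfrac12\langle I_{\lambda,m}'(-u),(-u)-\widetilde B(-u)\rangle\ge\tfrac12\|u-Au\|^2$ wait — more carefully, averaging the inequality at $u$ and at $-u$ and using $\|{-u}-A(-u)\|=\|u-Au\|$ (which follows since $A$ is odd, being the minimizer of the even-symmetric family $\kappa_u$) yields property (1). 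The same averaging preserves the two-sided norm bound, giving (2), and preserves cone invariance since $-(\pm D_m^0)=\mp D_m^0$, giving (3). \textbf{The main obstacle} is Step 1–2 bookkeeping: one must verify that a single neighborhood can be chosen on which \emph{both} inequalities hold with uniform slack and, when applicable, the cone condition too, and then check that local finiteness of the partition of unity genuinely delivers local Lipschitz continuity of $\widetilde B$ (here finite-dimensionality of $Y_m$ and the $\operatorname{dist}$-based construction of the $\pi_i$ are what make this routine rather than delicate).
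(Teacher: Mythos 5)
The paper offers no proof of this lemma at all --- it is imported verbatim from \cite{LWZ}, Lemma 3.5 --- and your proposal reconstructs the standard argument behind it (local constant approximation $w_u\approx Au$, locally Lipschitz partition of unity on the finite-dimensional set $E_m$, convex combination, then symmetrization), so the architecture is the right one. One step as written does not go through: your derivation of the right-hand inequality in (2), $\|u-Au\|\le 2\|u-\widetilde B(u)\|$, from the chain $\tfrac12\|u-Au\|^2\le\langle I_{\lambda,m}'(u),u-\widetilde B(u)\rangle\le\|I_{\lambda,m}'(u)\|\,\|u-\widetilde B(u)\|$ only yields, after invoking \eqref{aa2}, a bound with the non-uniform factor $c_2(1+\|u\|^2)$, not the constant $2$. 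The fix is to strengthen Step 1: since $\|v-Av\|$ is continuous and strictly positive near $u\in E_m$ while $\|Av-Au\|\to 0$ as $v\to u$, you may shrink $r_u$ so that $\|Av-Au\|\le\tfrac13\|v-Av\|$ on $B(u,r_u)$; then $u-\widetilde B(u)=(u-Au)+\sum_i\pi_i(u)(Au-Au_i)$ with the sum of norm at most $\tfrac13\|u-Au\|$, and the reverse triangle inequality gives $\|u-\widetilde B(u)\|\ge\tfrac23\|u-Au\|$ directly (the same smallness, combined with a local bound on $\|I_{\lambda,m}'\|$, is also what turns the constant $c_1$ of Lemma \ref{Aop} into the $\tfrac12$ of property (1)). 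The remaining ingredients you use are correct and worth stating explicitly: $\pm D_m^0$ is convex because $\mathrm{dist}(\cdot,\pm P_m)$ is a convex function of a convex set, $A$ is odd because $\phi_{-u}=\phi_u$ and $(f_4)$ give $A(-u)=-Au$ by uniqueness, and the averaging $B(u)=\tfrac12\bigl(\widetilde B(u)-\widetilde B(-u)\bigr)$ then preserves (1)--(3) exactly as you indicate.
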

\begin{lem}[\cite{LWZ}, Lemma 3.3]\label{tech}
Let $c<d$ and $\alpha>0$. For all $u\in Y_m$ such that $I_{\lambda, m}(u)\in[c,d]$ and $\|I_{\lambda, m}'(u)\|\geq\alpha$, there exists $\beta>0$ such that $\|u-B(u)\|\geq\beta$.
\end{lem}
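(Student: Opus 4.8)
The plan is to argue by contradiction in the usual Palais--Smale spirit, combining the two estimates of Lemma \ref{Bop}, part (2) and part (1), to reduce everything to a lower bound on $\|u-A(u)\|$, and then using the bound \eqref{aa2} from Lemma \ref{Aop} together with the boundedness of $I_{\lambda,m}$-sublevel sets on $[c,d]$ to close the argument. Suppose the conclusion fails: then there is a sequence $(u_n)\subset Y_m$ with $I_{\lambda,m}(u_n)\in[c,d]$, $\|I_{\lambda,m}'(u_n)\|\geq\alpha$, and $\|u_n-B(u_n)\|\to0$. By Lemma \ref{Bop}(2), $\|u_n-A(u_n)\|\leq 2\|u_n-B(u_n)\|\to0$ as well.

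Next I would show that $(u_n)$ is bounded in $Y_m$. This is where the hypothesis $I_{\lambda,m}(u_n)\leq d$ is used: since $Y_m$ is finite-dimensional and, by Lemma \ref{l1}(1), $I_\lambda(u)\to-\infty$ as $\|u\|\to\infty$ on $Y_m$ (uniformly in $\lambda$), the set $\{u\in Y_m : I_{\lambda,m}(u)\leq d\}$ is bounded; hence $\sup_n\|u_n\|<\infty$. Therefore by \eqref{aa2} of Lemma \ref{Aop},
\begin{equation*}
\|I_{\lambda,m}'(u_n)\|\leq c_2\bigl(1+\|u_n\|^2\bigr)\|u_n-A(u_n)\|\leq c_2\bigl(1+\sup_n\|u_n\|^2\bigr)\|u_n-A(u_n)\|\longrightarrow0,
\end{equation*}
which contradicts $\|I_{\lambda,m}'(u_n)\|\geq\alpha>0$. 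This contradiction yields the existence of the desired $\beta>0$: if no such $\beta$ existed, one could extract precisely such a sequence.

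The only genuinely delicate point is the boundedness of the sequence $(u_n)$, and here the finite-dimensionality of $Y_m$ does the work: the nonlocal term $\int_\Omega\phi_uu^2$ and the critical term $\lambda\int_\Omega u^6$ are controlled because all norms on $Y_m$ are equivalent, and $(f_3)$ forces the superquadratic decay $F(x,u)\geq c_1|u|^\mu-c_2$ with $\mu>4$, so $I_{\lambda,m}$ is anticoercive on $Y_m$ — this is exactly the content of Lemma \ref{l1}(1). Everything else is a mechanical chaining of inequalities; the quadratic factor $1+\|u\|^2$ in \eqref{aa2} is harmless precisely because the sequence lives in a bounded set. I would present the argument in this contradiction form, invoking Lemma \ref{l1}(1), Lemma \ref{Aop}(2), and Lemma \ref{Bop}(2), and noting that no compactness (Palais--Smale) assumption on $\Phi$ itself is needed since $Y_m$ is finite-dimensional.
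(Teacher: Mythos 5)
Your overall strategy is sound and the contradiction argument has exactly the right shape, but the justification of the key step --- boundedness of the sequence $(u_n)$ --- is wrong as written. You assert that, because $I_\lambda(u)\to-\infty$ as $\|u\|\to\infty$ on $Y_m$ (Lemma \ref{l1}-(1)), the sublevel set $\{u\in Y_m : I_{\lambda,m}(u)\leq d\}$ is bounded, and that "this is where the hypothesis $I_{\lambda,m}(u_n)\leq d$ is used". For an anticoercive functional the sublevel sets are \emph{unbounded} (they contain the complement of every sufficiently large ball); what anticoercivity gives you is boundedness of the \emph{superlevel} set $\{u\in Y_m : I_{\lambda,m}(u)\geq c\}$, since for $\|u\|$ large one has $I_{\lambda,m}(u)<c$. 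The hypothesis you actually need is the lower bound $I_{\lambda,m}(u_n)\geq c$, not the upper bound. Fortunately both bounds are among your hypotheses, so the conclusion $\sup_n\|u_n\|<\infty$ does follow, and the rest of your argument (Lemma \ref{Bop}-(2) to pass from $B$ to $A$, then \eqref{aa2} to force $\|I_{\lambda,m}'(u_n)\|\to0$, contradicting $\|I_{\lambda,m}'(u_n)\|\geq\alpha$) goes through verbatim. The fix is one line, but as literally written the step is false.

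Concerning the route: the paper does not prove Lemma \ref{tech} itself (it cites \cite{LWZ}), but it does prove the analogous Lemma \ref{tech4} in Section \ref{quatre}, where boundedness of $(u_n)$ is extracted from a Nehari-type identity for $J_m(u)-\frac{1}{\mu}\langle u,u-Pu\rangle$ leading to the inequality \eqref{4.4}, rather than from anticoercivity. Your argument is therefore a genuinely different --- and, in this finite-dimensional setting, simpler --- way to obtain boundedness, at the price of invoking Lemma \ref{l1}-(1) for the index $m$ rather than $k$ (harmless, since $Y_m$ is again finite-dimensional and the same norm-equivalence argument applies, uniformly in $\lambda\geq 0$). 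The final step, converting $\|u_n-B(u_n)\|\to0$ plus boundedness into $\|I_{\lambda,m}'(u_n)\|\to0$ via \eqref{aa2}, is identical in both arguments.
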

\begin{lem}\label{lemam}
For any $\lambda\in[0,\Lambda_k^1[$, there exists a sequence $u_{k,m}\in V_{\frac{\mu_m}{2}}(S_m)$ such that
\begin{equation}
I'_{\lambda, m}(u_{k,m})=0\quad\text{and}\quad I_\lambda(u_{k,m})\in[b_k,\max_{u\in B_k}I_0(u)].
\end{equation}
\end{lem}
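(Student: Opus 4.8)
The plan is to apply the abstract Theorem~\ref{scft} to the even functional $\Phi=I_\lambda$ on $X=H_0^1(\Omega)$ — whose hypotheses $(A_1)$--$(A_3)$ are essentially packaged in Lemmas~\ref{l1}, \ref{Aop}, \ref{Bop} and \ref{tech} — and then to upgrade the resulting Palais--Smale sequence to a genuine critical point $u_{k,m}$, the point being that $\dim Y_m<\infty$ makes such a sequence automatically compact once it is bounded.

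First I would check the hypotheses of Theorem~\ref{scft}. The functional $I_\lambda$ is even: $(f_4)$ gives $F(x,-u)=F(x,u)$, and $\phi_{-u}=\phi_u$ since both solve $-\Delta\phi=u^2$, so $I_\lambda(-u)=I_\lambda(u)$; it maps bounded sets to bounded sets by $(f_1)$, the Sobolev embeddings and the estimate $\|\phi_u\|\le C\|u\|^2$. For $(A_1)$: Lemma~\ref{l1}(1) shows $I_\lambda\to-\infty$ on the finite-dimensional space $Y_k$ uniformly in $\lambda$, so we fix $\rho_k>r_k$ large enough that $a_k=\max_{\partial B_k}I_\lambda<0$, while $b_k=\inf_{N_k}I_\lambda\to\infty$ by Remark~\ref{rembk}; hence $a_k<0<b_k$ for $k$ large. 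For $(A_2)$: with $\mu_m\in\,]0,\delta_m[$ as provided by Lemma~\ref{Aop}(3) and $\delta_m$ as in \eqref{deltam}, Remark~\ref{sm} gives $N_k^m\subset S_m$. For $(A_3)$: take $B$ to be the odd, locally Lipschitz vector field of Lemma~\ref{Bop}; then (i) is Lemma~\ref{Bop}(3), (ii) follows by combining Lemma~\ref{Bop}(1)--(2), which give $\langle I'_{\lambda,m}(u),u-B(u)\rangle\ge\frac12\|u-A(u)\|^2\ge\frac18\|u-B(u)\|^2$ (so $\alpha_1=\tfrac18$), and (iii) is exactly Lemma~\ref{tech}.

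Theorem~\ref{scft} then provides $(u_{k,m}^n)_n\subset V_{\mu_m/2}(S_m)$ with $I'_{\lambda,m}(u_{k,m}^n)\to0$ and $I_{\lambda,m}(u_{k,m}^n)\to c_{k,m}\in[b_k,\max_{B_k}I_\lambda]$; since $\lambda\ge0$ gives $I_\lambda(u)=I_0(u)-\frac\lambda6\int_\Omega u^6\le I_0(u)$, the level lies in $[b_k,\max_{B_k}I_0]$, as required. To pass to the limit I would first bound $(u_{k,m}^n)_n$ by the Ambrosetti--Rabinowitz device. Set $\theta:=\min\{\mu,6\}$, so that $4<\theta$, $\theta\le\mu$ and $\theta\le6$. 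Then $(f_3)$ gives $\frac1\theta uf(x,u)-F(x,u)\ge0$, the critical term has coefficient $\frac1\theta-\frac16\ge0$, and $\frac12-\frac1\theta>0$, $\frac14-\frac1\theta\ge0$; consequently
\begin{equation*}
I_{\lambda,m}(u)-\tfrac1\theta\langle I'_{\lambda,m}(u),u\rangle\ \ge\ \big(\tfrac12-\tfrac1\theta\big)\|u\|^2,\qquad u\in Y_m .
\end{equation*}
Evaluating at $u=u_{k,m}^n$ and using $I_{\lambda,m}(u_{k,m}^n)=O(1)$, $\|I'_{\lambda,m}(u_{k,m}^n)\|=o(1)$ bounds $\|u_{k,m}^n\|$.

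Since $\dim Y_m<\infty$, a subsequence of $(u_{k,m}^n)_n$ converges strongly in $Y_m$ to some $u_{k,m}$, which lies in the closed set $V_{\mu_m/2}(S_m)$; by the $C^1$ regularity of $I_{\lambda,m}$ one gets $I'_{\lambda,m}(u_{k,m})=0$ and $I_\lambda(u_{k,m})=c_{k,m}\in[b_k,\max_{B_k}I_0]$, which is the assertion. The only genuinely delicate point is the exponent $\theta$ in the boundedness estimate: when $\mu>6$ one must truncate to $\theta=6$ rather than $\theta=\mu$, so that the critical term does not reverse the inequality — this is harmless, since that term then simply drops out; everything else is an assembly of Lemmas~\ref{l1}, \ref{Aop}, \ref{Bop}, \ref{tech} and Theorem~\ref{scft}.
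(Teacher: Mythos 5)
Your proposal is correct and follows essentially the same route as the paper: verify $(A_1)$--$(A_3)$ via Lemmas~\ref{l1}, \ref{Bop}, \ref{tech} and Remark~\ref{sm}, apply Theorem~\ref{scft}, bound the resulting Palais--Smale sequence by the Ambrosetti--Rabinowitz device, and conclude by finite-dimensional compactness of $Y_m$ together with closedness of $V_{\mu_m/2}(S_m)$. Your extra care with $\theta=\min\{\mu,6\}$ addresses a point the paper leaves implicit in \eqref{bd} (there one must tacitly assume $\mu\le 6$, which is harmless since $(f_3)$ with $\mu$ implies $(f_3)$ with any smaller exponent greater than $4$).
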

\begin{proof}
By Lemmas \ref{l1}, \ref{Bop} and \ref{tech} and Remark \ref{sm}, the assumptions of Theorem \ref{scft} are satisfied. Therefore, applying Theorem \ref{scft} we obtain a sequence $(u_{k,m}^n)_n\subset V_{\frac{\mu_m}{2}}(S_m)$ such that
\begin{equation*}
\lim_{\substack{n\to\infty}}I'_{\lambda,m}(u_{k,m}^n)=0\quad\text{and}\quad \lim_{\substack{n\to\infty}}\Phi_m(u_{k,m}^n)\in\big[b_k,\max_{\substack{u\in B_k}}I_0(u)\big].
\end{equation*}
Using $(f_1)$ and $(f_3)$ and the fact that $\phi_u\geq0$, we obtain
 \begin{equation}\label{bd}
 I_\lambda(u)-\frac{1}{\mu}\big<I'_{\lambda,m}(u),u\big>\geq \big(\frac{1}{2}-\frac{1}{\mu}\big)\|u\|^2,\quad\forall u\in Y_m.
 \end{equation}
We then deduce that the sequence $(u_{k,m}^n)_n$ above is bounded in $Y_m$. Since $\dim Y_m<\infty$, it follows that $u_{k,m}^n\to u_{k,m}$, up to a subsequence, in $Y_m$. Since $V_{\frac{\mu_m}{2}}(S_m)$ is closed and $I_{\lambda,m}$ is smooth, the conclusion follows.
\end{proof}
We are now ready to give the proof of Theorem \ref{result1}.
\begin{proof}[Proof of Theorem \ref{result1}]
Here we assume that $\lambda=0$. We consider the elements $u_{k,m}$ obtained in Lemma \ref{lemam}. In view of Lemma \ref{bd}, the sequence $(u_{k,m})_m$ is bounded in $X$. Hence, up to a subsequence, $u_{k,m}\rightharpoonup u_k$ in $X$ and $u_{k,m}\to u_k$ in $L^q(\Omega)$ ($1\leq q<6$), as $m\to\infty$.\\
Let us denote by $\Pi_m:X\to Y_m$ the orthogonal projection. It is clear that $\Pi_m u_k\to u_k$ in $X$, as $m\to\infty$. We have
 \begin{multline}
 \big<I'_{0,m}(u_{k,m}),u_{k,m}-\Pi_mu_k\big>=\big<u_{k,m},u_{k,m}-\Pi_mu_k\big>\\+\int_\Omega \phi_{u_{k,m}}u_{k,m}\big(u_{k,m}-\Pi_mu_k\big)
 -\int_\Omega \big(u_{k,m}-\Pi_mu_k\big)f(x,u_{k,m}).\label{star}
 \end{multline}
Since the sequence $\big(u_{k,m}\big)_m$ is bounded, we deduce from $(f_1)$ that the sequence $\big(\big|f(x,u_{k,m})\big|_{\frac{p}{p-1}}\big)_m$ is also bounded. It follows, using the H\"{o}lder inequality that
\begin{equation*}
\big|\int_\Omega\big(u_{k,m}-\Pi_mu_k\big)f(x,u_{k,m})\big|\leq|u_{k,m}-\Pi_mu_k|_p\big|f(x,u_{k,m})\big|_{\frac{p}{p-1}}\rightarrow0.
\end{equation*}
On the other hand, we also obtain using the H\"{o}lder inequality
\begin{equation*}
\big|\int_\Omega\phi_{u_{k,m}}u_{k,m}\big(u_{k,m}-\Pi_mu_k\big)\big|\leq|\phi_{u_{k,m}}|_3|u_{k,m}|_3|u_{k,m}-\Pi_mu_k|_3\to0.
\end{equation*}
Since $I'_{0,m}(u_{k,m})=0$, we deduce from \eqref{star} that $\|u_{k,m}\|\to\|u_k\|$, and hence that $u_{k,m}\to u_k$ in $X$, as $m\to\infty$. At this point, it is straightforward to verify that $u_k$ is a critical point of $I_0$ such that $I_0(u_k)\geq b_k$. Since $b_k\to\infty$, as $k\to\infty$ (see Remark \ref{rembk}), the proof will be completed if we show that $u_k$ is sign-changing. \\
As usual, we denote $u^\pm:=\max\{0,\pm u\}$, for any $u\in X$. Observe that
\begin{equation*}
\big<I'_{0,m}(u_{k,m}),u_{k,m}^{\pm}\big>=0\,\,\Rightarrow\,\, \|u_{k,m}^\pm\|^2\leq\int_\Omega u_{m_j}^\pm f(x,u_{k,m}^\pm).
\end{equation*}
 $(f_1)$ and $(f_2)$ imply 
\begin{equation}\label{feps}
\forall\varepsilon>0,\quad\exists c_\varepsilon>0\,\,;\,\, |f(x,t)|\leq \varepsilon|t|+c_\varepsilon|t|^{p-1},\quad\forall (x,t)\in\Omega\times\mathbb{R}.
\end{equation}
We then obtain by using the Sobolev embedding theorem
\begin{equation*}
\|u_{k,m}^\pm\|^2\leq \int_\Omega u_{k,m}^\pm f(x,u_{k,m}^\pm)\leq c\big(\varepsilon\|u_{k,m}^\pm\|^2+c_\varepsilon\|u_{k,m}^\pm\|^p\big),
\end{equation*} 
for some constant $c>0$.  Since $u_{k,m}$ is sign-changing, $u_{k,m}^\pm$ are not equal to $0$. Choosing $\varepsilon$ small enough it follows that $(\|u_{m_j}^\pm\|)$ are bounded below by strictly positive constants which do not depend on $m$. Hence $u_k$ is sign-changing.
\end{proof}
We now prove our result in the critical case.
\begin{proof}[Proof Theorem \ref{result2}]
 We suppose here that $0<\lambda<\Lambda_k^1$. We consider again the elements $u_{k,m}$ obtained in Lemma \ref{lemam}. In view of Lemma \ref{bd}, the sequence $(u_{k,m})_m$ is bounded in $X$. Hence, up to a subsequence, we have as $m\to\infty$:
\begin{align}
\nonumber u_{k,m}&\rightharpoonup u_k \text{ in }X;\\
u_{k,m}&\to u_k \text{ a.e. in }\Omega;\label{almost}\\
u_{k,m}&\to u_k \text{ in }L^r(\Omega)\,\,(1\leq r<6);\label{eq1}\\ 
u_{k,m}&\rightharpoonup u_k \text{ in }L^6(\Omega);\label{eq2}\\ 
\nonumber\phi_{u_{k,m}}&\rightharpoonup\phi_{u_k} \text{ in }X;\\
\nonumber\phi_{u_{k,m}}&\to\phi_{u_k} \text{ in }L^r(\Omega)\,\, (1\leq r<6);\\
\int_\Omega \phi_{u_{k,m}}u_{k,m}^2&\to\int_\Omega\phi_{u_k}u_k^2.\label{eq3}
\end{align} 
We claim that
\begin{equation}\label{claim}
u_{k,m}\to u_k \quad\text{if}\quad \lambda<\Lambda_k^2:=S^3/\big(3\max_{B_k}I_0\big).
\end{equation}
We fix $\lambda=\lambda_k$ such that $0<\lambda_k<\min\{\Lambda_k^1,\Lambda_k^2\}$. Using the same argument as in the proof of Theorem \ref{result1} above, we show that $u_k$ is a sign-changing critical point of $I_{\lambda_k}$ such that $I_{\lambda_k}(u_k)\geq b_k$. Since $b_k\to\infty$, as $k\to\infty$, the conclusion of Theorem \ref{result2} follows.
\par We complete the proof of Theorem \ref{result2} by proving our above claim. Let us then assume that $\lambda<\Lambda_k^2$.\\
 \eqref{eq2}, \eqref{almost} and Theorem 10.36 in \cite{W95} imply that
 \begin{equation*}
 u_{k,m}^6\rightharpoonup u_k^6 \,\,\text{ in }\,\, L^{\frac{6}{5}}(\Omega).
 \end{equation*}
For any $v\in X$ we obtain, using the H\"{o}lder inequality
\begin{align*}
\big|\int_\Omega\big(\phi_{u_{k,m}}u_{k,m}&-\phi_{u_k}u_k\big)v\big|=\big|\int_\Omega\phi_{u_{k,m}}(u_{k,m}-u_k)v\big|+\big|\int_\Omega(\phi_{u_{k,m}}-\phi_{u_k})u_kv\big|\\
&\leq |\phi_{u_{k,m}}|_3|u_{k,m}-u_k|_3|v|_3+|\phi_{u_{k,m}}-\phi_{u_k}|_3|u_k|_3|v|_3\to0,\,m\to\infty.
\end{align*}
This implies that $\phi_{u_{k,m}}u_{k,m}\rightharpoonup\phi_{u_k}u_k$ in $\mathcal{D}'(\Omega)$, the space of distributions.\\
It is clear that $-\Delta u_{k,m}\rightharpoonup -\Delta u_k$ and $f(x,u_{k,m})\rightharpoonup f(x,u_k)$ in $\mathcal{D}'(\Omega)$. Therefore, we obtain
\begin{equation*}
-\Delta u_k+\phi_{u_k}u_k=f(x,u_k)+\lambda u_k^5\,\,\text{ in }\,\, \mathcal{D}'(\Omega).
\end{equation*}
Multiplying the two members of this equation by $u_k$ and integrating, we obtain
\begin{equation}\label{onee}
\|u_k\|^2+\int_\Omega\phi_{u_k}u_k^2=\int_\Omega u_kf(x,u_k)+\lambda\int_\Omega u^6_k.
\end{equation}
On the other hand, $\big<I'_{\lambda,m}(u_{k,m}),u_{k,m}\big>=0$ is equivalent to
\begin{equation}\label{twoo}
\|u_{k,m}\|^2+\int_\Omega\phi_{u_{k,m}}u_{k,m}^2-\int_\Omega u_{k,m}f(x,u_{k,m})-\lambda |u_{k,m}|_6^6=0.
\end{equation}
It is clear that
\begin{equation}\label{u}
\|u_{k,m}\|^2=\|u_k\|^2+\|u_{k,m}-u_k\|^2+\circ(1)
\end{equation}
By Brezis-Lieb lemma \cite{BL83} we have
\begin{equation}\label{u1}
|u_{k,m}|^6_6=|u_k|_6^6+|u_{k,m}-u_k|^6_6+\circ(1).
\end{equation}
One can verify easily that \eqref{feps} implies that, for almost every $x\in\Omega$, the functions $s\mapsto sf(x,s)$ and $s\mapsto F(x,s)$ satisfy the conditions of Theorem 2 in \cite{BL83}. It then follows that
\begin{align}
\int_\Omega u_{k,m}f(x,u_{k,m})&=\int_\Omega u_{k}f(x,u_{k})+\int_\Omega \big(u_{k,m}-u_k\big)f\big(x,u_{k,m}-u_k\big)+\circ(1)\label{u2}\\
\int_\Omega F(x,u_{k,m})&=\int_\Omega F(x,u_{k})+\int_\Omega F\big(x,u_{k,m}-u_k\big)+\circ(1)\label{u3}.
\end{align}
Using \eqref{feps} and \eqref{eq1}, it is readily seen that
\begin{equation}\label{u4}
\int_\Omega \big(u_{k,m}-u_k\big)f\big(x,u_{k,m}-u_k\big)\to0
\end{equation}
\begin{equation}\label{u5}
\int_\Omega F\big(x,u_{k,m}-u_k\big)\to0.
\end{equation}
We then deduce from \eqref{twoo}, using \eqref{eq1}, \eqref{eq3}, \eqref{u}, \eqref{u1}, \eqref{u2}, and \eqref{u4} that
\begin{equation}\label{threee}
\|u_{k,m}-u_k\|^2-\lambda|u_{m,k}-u_k|_6^6=\circ(1).
\end{equation}
This implies that
\begin{equation}\label{four}
\|u_{k,m}-u_k\|^2\leq\lambda S^{-3}\|u_{m,k}-u_k\|^6+\circ(1),
\end{equation}
where $S$ is defined in \eqref{sbetak}.
If $\|u_{m,k}-u_k\|\to s_0>0$, then by \eqref{four} we would have
\begin{equation}\label{szero}
s_0\geq\big(S^3/\lambda\big)^{\frac{1}{4}}.
\end{equation}
On the other hand, in view of \eqref{eq3}, \eqref{u}, \eqref{u1}, \eqref{u3} and \eqref{u5}, we have
\begin{align*}
\max_{B_k}I_0\geq I_{\lambda,m}(u_{m,k})&=\frac{1}{2}\|u_{m,k}\|^2+\frac{1}{4}\int_\Omega\phi_{u_{m,k}}u_{m,k}^2-\int_\Omega F(x,u_{m,k})-\frac{\lambda}{6}|u_{m,k}|_6^6\\
&=\frac{1}{2}\|u_{m,k}-u_k\|^2-\frac{\lambda}{6}|u_{m,k}-u_k|_6^6+I_0(u)+\circ(1).
\end{align*}
Using \eqref{onee}, we see that
\begin{equation*}
I_0(u_k)=\Big(\frac{1}{2}-\frac{1}{4}\Big)\|u_k\|^2+\int_\Omega\Big(\frac{1}{4}u_kf(x,u_k)-F(x,u_k)\Big)+\Big(\frac{1}{4}-\frac{1}{6}\Big)|u_k|_6^6\geq0.
\end{equation*}
This implies that
\begin{align*}
\max_{B_k}I_0&\geq\frac{1}{2}\|u_{m,k}-u_k\|^2-\frac{\lambda}{6}|u_{m,k}-u_k|_6^6+\circ(1)\\
&=\frac{1}{3}\|u_{k,m}-u_k\|^2+\circ(1)\quad \big(\text{by } \eqref{threee}\big).
\end{align*}
Passing to the limit $m\to\infty$ and using \eqref{four}, we obtain 
\begin{equation*}
\lambda\geq S^3/\big(3\max_{B_k}I_0\big)=\Lambda_k^2,
\end{equation*}
which is a contradiction. Therefore \eqref{claim} holds.
\end{proof}
\section{Proof of Theorem \ref{result3}}\label{quatre}
We consider in this section the problem
\begin{equation*}
 (SP1)\qquad   \left\{
      \begin{array}{ll}
        -\Delta u+\psi u^3=|u|^{q-2}u\quad \text{in }\Omega & \hbox{} \\
        \quad\quad\,\,\,\,\,-\Delta \psi=\frac{1}{2} u^4\qquad\, \text{ in }\Omega & \hbox{} \\
       \quad\quad\,\,\, u=\psi=0 \qquad\quad\,\,\,\text{on }\partial\Omega, & \hbox{}
      \end{array}
    \right.
\end{equation*}
where $\Omega$ is a bounded domain in $\mathbb{R}^N$ ($N=1,2$) with smooth boundary and $q>8$.\\
One can verify easily that solutions of $(SP1)$ are critical points of the functional defined on $H_0^1(\Omega)\times H_0^1(\Omega)$ by
\begin{equation*}
K(u,\psi)=\frac{1}{2}\int_\Omega|\nabla u|^2-\frac{1}{4}\int_\Omega|\nabla \psi|^2+\frac{1}{4}\int_\Omega \psi u^4-\frac{1}{q}\int_\Omega|u|^q.
\end{equation*}
Since $K$ is strongly indefinite, we adopt the same strategy as in Section \ref{trois} to reduce the problem to a definite one. The following proposition can be proved in a standard way \big(see for example \cite{R06}\big).
\begin{prop}\label{propimpo}
For all $u\in H_0^1(\Omega)$, the problem
\begin{equation*}
-\Delta \psi=\frac{1}{4}u^4,\quad\psi\in H_0^1(\Omega)
\end{equation*}
possesses a unique solution $\psi_u$. Moreover, we have:
\begin{enumerate}
\item[(i)] $\psi_u\geq0$ and there exists $C>0$ such that $\|\psi_u\|\leq C\|u\|^4$;
\item[(ii)] $\psi_{tu}=t^4\psi_u$, for all $t\leq0$.
\item[(iii)] If $u_n\rightharpoonup u$ in $H_0^1(\Omega)$ then $\psi_{u_n}\rightharpoonup\psi_u$  in $H_0^1(\Omega)$ and
\begin{equation}
\int_\Omega\psi_{u_n}u_n^4\to\int_\Omega\psi_uu^4.
\end{equation}
\end{enumerate}
\end{prop}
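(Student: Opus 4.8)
The plan is to reproduce, with the quartic term $u^4$ in place of $u^2$, the reduction already carried out for the auxiliary function $\phi_u$ in Section~\ref{trois}. The decisive feature here is that $\Omega\subset\mathbb{R}^N$ with $N\le 2$: the Sobolev embedding $H_0^1(\Omega)\hookrightarrow L^s(\Omega)$ then holds for \emph{every} finite $s$ and is compact, so the nonlinearity behaves subcritically and all the estimates below are elementary.

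For existence and uniqueness I would invoke Lax--Milgram. Given $u\in H_0^1(\Omega)$ we have $u\in L^5(\Omega)$, hence $u^4\in L^{5/4}(\Omega)$, and by Hölder's inequality the linear functional $v\mapsto\frac14\int_\Omega u^4 v$ is bounded on $H_0^1(\Omega)$ with norm $\le C\|u\|^4$; since the Dirichlet form $(\psi,v)\mapsto\int_\Omega\nabla\psi\nabla v$ is bounded and coercive on $H_0^1(\Omega)$ by Poincaré's inequality, there is a unique $\psi_u\in H_0^1(\Omega)$ with $-\Delta\psi_u=\frac14 u^4$. Testing this equation against $\psi_u$ and using Hölder and Sobolev gives $\|\psi_u\|^2=\frac14\int_\Omega u^4\psi_u\le C\|u\|^4\|\psi_u\|$, i.e. $\|\psi_u\|\le C\|u\|^4$, which is (i). For the sign, test against $\psi_u^-:=\max\{0,-\psi_u\}\in H_0^1(\Omega)$: one has $\int_\Omega\nabla\psi_u\nabla\psi_u^-=-\|\psi_u^-\|^2$ while $\frac14\int_\Omega u^4\psi_u^-\ge0$, so $\|\psi_u^-\|^2\le0$ and therefore $\psi_u^-\equiv0$, that is $\psi_u\ge0$. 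Part (ii) is then immediate: for $t\ge0$ the function $t^4\psi_u\in H_0^1(\Omega)$ satisfies $-\Delta(t^4\psi_u)=\frac14(tu)^4$, so uniqueness forces $\psi_{tu}=t^4\psi_u$.

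It remains to prove (iii). Let $u_n\rightharpoonup u$ in $H_0^1(\Omega)$. By the Rellich--Kondrachov theorem (again using $N\le2$), $u_n\to u$ in $L^5(\Omega)$, hence $u_n^4\to u^4$ in $L^{5/4}(\Omega)$ by continuity of the Nemytskii operator $w\mapsto w^4\colon L^5(\Omega)\to L^{5/4}(\Omega)$. Since $\|\psi_{u_n}\|\le C\|u_n\|^4$ is bounded, along a subsequence $\psi_{u_n}\rightharpoonup\psi$ in $H_0^1(\Omega)$; passing to the limit in $\int_\Omega\nabla\psi_{u_n}\nabla v=\frac14\int_\Omega u_n^4 v$ for each $v\in H_0^1(\Omega)$ shows $-\Delta\psi=\frac14 u^4$, whence $\psi=\psi_u$, and since this limit does not depend on the subsequence the full sequence satisfies $\psi_{u_n}\rightharpoonup\psi_u$. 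For the last assertion, split
\[
\int_\Omega\psi_{u_n}u_n^4-\int_\Omega\psi_u u^4=\int_\Omega\psi_{u_n}\bigl(u_n^4-u^4\bigr)+\int_\Omega\bigl(\psi_{u_n}-\psi_u\bigr)u^4.
\]
The first integral is bounded by $|\psi_{u_n}|_5\,|u_n^4-u^4|_{5/4}\le C\|u_n\|^4\,|u_n^4-u^4|_{5/4}\to0$; the second tends to $0$ because $\psi_{u_n}\to\psi_u$ in $L^5(\Omega)$ by Rellich--Kondrachov, so that $|\psi_{u_n}-\psi_u|_5\,|u^4|_{5/4}\to0$.

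I do not anticipate any genuine obstacle: the argument is a verbatim adaptation of the proof of the proposition on $\phi_u$ recalled in Section~\ref{trois}, and the only point requiring care is the bookkeeping of the conjugate exponents $5$ and $5/4$ (any Hölder pair compatible with the embedding would serve equally well). The dimensional restriction $N\le2$ is precisely what guarantees the compact embedding $H_0^1(\Omega)\hookrightarrow L^5(\Omega)$ and is used at every step.
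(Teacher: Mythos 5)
Your proof is correct and is exactly the standard Lax--Milgram argument the paper alludes to when it says the proposition ``can be proved in a standard way (see for example \cite{R06})''; the paper itself supplies no details, and your use of the compact embedding $H_0^1(\Omega)\hookrightarrow L^5(\Omega)$ for $N\le 2$, the test against $\psi_u^-$ for nonnegativity, and the two-term splitting in (iii) are all the intended steps. The only cosmetic remark is that the statement's ``$t\le 0$'' in (ii) is evidently a typo for ``$t\ge 0$'' (and in fact your homogeneity argument works for every real $t$ since the right-hand side involves $t^4$).
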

We consider the problem
\begin{equation}
-\Delta u+\psi_uu^3=|u|^{q-2}u,\quad u\in H_0^1(\Omega)
\end{equation}
which solutions correspond to critical points of the functional
\begin{equation*}
J(u)=\frac{1}{2}\|u\|^2+\frac{1}{8}\int_\Omega \psi_uu^4-\frac{1}{q}\int_\Omega |u|^q,\quad u\in X:= H_0^1(\Omega).
\end{equation*}
By a standard argument one shows that $J$ is smooth on $H_0^1(\Omega)$ and 
\begin{equation*}
\big<J'(u),v\big>=\int_\Omega\nabla u\nabla v+\int_\Omega \psi_uu^3v-\int_\Omega|u|^{q-2}uv,\quad\forall u,v\in H_0^1(\Omega).
\end{equation*}
\begin{lem}[\cite{ADaL13}, Proposition 2.2]
Let $(u,\psi)\in H_0^1(\Omega)\times H_0^1(\Omega)$. The following statements are equivalent:
\begin{enumerate}
\item[(1)] $(u,\psi)$ is a critical point of $K$.
\item[(2)] $u$ is a critical point of $J$ and $\psi=\psi_u$.
\end{enumerate}
\end{lem}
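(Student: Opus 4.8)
The plan is to obtain the equivalence by a direct computation of the two partial Gateaux derivatives of $K$, together with the uniqueness statement in Proposition \ref{propimpo}. First I would record, for $(u,\psi)\in H_0^1(\Omega)\times H_0^1(\Omega)$ and test functions $v,w\in H_0^1(\Omega)$,
\[
\langle\partial_u K(u,\psi),v\rangle=\int_\Omega\nabla u\nabla v+\int_\Omega\psi u^3 v-\int_\Omega|u|^{q-2}uv,\qquad
\langle\partial_\psi K(u,\psi),w\rangle=-\tfrac12\int_\Omega\nabla\psi\nabla w+\tfrac14\int_\Omega u^4 w,
\]
so that, by definition, $(u,\psi)$ is a critical point of $K$ exactly when both of these vanish for every $v$ and every $w$. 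Here the only point to check is that $K$ is of class $C^1$ on $H_0^1(\Omega)\times H_0^1(\Omega)$, which is standard since $N\in\{1,2\}$ makes all the embeddings $H_0^1(\Omega)\hookrightarrow L^r(\Omega)$ ($r<\infty$) available.

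Second, I would observe that the equation $\langle\partial_\psi K(u,\psi),w\rangle=0$ for all $w\in H_0^1(\Omega)$ is precisely the weak formulation of the Poisson equation appearing in the second line of $(SP1)$; by the Lax--Milgram argument underlying Proposition \ref{propimpo} this equation has a unique solution, which is by definition $\psi_u$. Hence at any critical point of $K$ one necessarily has $\psi=\psi_u$, and conversely, if $\psi=\psi_u$ then $\langle\partial_\psi K(u,\psi),w\rangle=0$ for all $w$. This settles the "$\psi=\psi_u$" part of the equivalence in both directions.

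Third, I would substitute $\psi=\psi_u$ into $\partial_u K$ and compare with the formula for $J'$ stated just above the lemma: one has
\[
\langle\partial_u K(u,\psi_u),v\rangle=\int_\Omega\nabla u\nabla v+\int_\Omega\psi_u u^3 v-\int_\Omega|u|^{q-2}uv=\langle J'(u),v\rangle\qquad\text{for all }v\in H_0^1(\Omega),
\]
so the conditions $\partial_u K(u,\psi_u)=0$ and $J'(u)=0$ are literally identical. Combining the two observations, $(u,\psi)$ is a critical point of $K$ iff $\psi=\psi_u$ and $\partial_u K(u,\psi_u)=0$, iff $\psi=\psi_u$ and $J'(u)=0$, which is statement (2). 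Each implication is then immediate.

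The only genuine content, and the step I expect to require the most care, is the identity $\langle\partial_u K(u,\psi_u),v\rangle=\langle J'(u),v\rangle$, i.e. the fact that differentiating the nonlocal term $\tfrac18\int_\Omega\psi_u u^4$ of $J$ produces exactly $\int_\Omega\psi_u u^3 v$ and not $\tfrac12\int_\Omega\psi_u u^3 v$. This rests on the differentiability of $u\mapsto\psi_u$ (again by Lax--Milgram, or the implicit function theorem), whose directional derivative $\psi_u'[v]$ satisfies the linearised Poisson equation $-\Delta(\psi_u'[v])=$ (const)$\,u^3 v$; testing this with $\psi_u$ and using the equation for $\psi_u$ itself gives $\int_\Omega\psi_u'[v]\,u^4=4\int_\Omega\psi_u u^3 v$, so the "implicit" contribution $\tfrac18\int_\Omega\psi_u'[v]u^4$ and the "explicit" contribution $\tfrac12\int_\Omega\psi_u u^3 v$ each equal $\tfrac12\int_\Omega\psi_u u^3 v$ and add up to $\int_\Omega\psi_u u^3 v$. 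All the integrals are finite because of the Sobolev embeddings in dimension $N\le 2$, which is exactly why $J$ is smooth with the stated derivative, as already asserted in the text; with that identity granted the lemma follows with no further work.
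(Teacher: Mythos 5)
Your argument is correct and is, in substance, the standard Benci--Fortunato reduction argument; the paper itself gives no proof (it defers to \cite{ADaL13}, Proposition 2.2), and the proof there is the same computation. The one organizational difference is in your third step: rather than computing the implicit derivative $\psi_u'[v]$ and verifying by the symmetry identity $\int_\Omega \psi_u'[v]\,u^4=4\int_\Omega \psi_u u^3v$ that the explicit and implicit contributions each give $\tfrac12\int_\Omega\psi_u u^3v$, the usual shortcut is to note that $J(u)=K(u,\psi_u)$ and apply the chain rule, whereupon the implicit term is $\langle\partial_\psi K(u,\psi_u),\psi_u'[v]\rangle=0$ by the very definition of $\psi_u$; your computation is an unwinding of that and is equally valid. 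One caveat you should make explicit: the paper is internally inconsistent about the normalization of $\psi_u$ --- system $(SP1)$ and the functional $K$ require $-\Delta\psi_u=\tfrac12u^4$ (and only with this normalization does $J(u)=K(u,\psi_u)$ hold with the coefficient $\tfrac18$), whereas Proposition \ref{propimpo} defines $\psi_u$ by $-\Delta\psi_u=\tfrac14u^4$. Your proof silently adopts the $(SP1)$-consistent normalization (``the Poisson equation appearing in the second line of $(SP1)$''), which is the correct reading, but as literally written the lemma and Proposition \ref{propimpo} differ by a factor of $2$ in $\psi$; this is a typo in the paper, not a gap in your argument.
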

For $k\geq2$, we consider $Y_k$ and $Z_k$ defined in \eqref{ykzk}. Similar to Lemma \ref{l1}, the following result holds.
\begin{lem}\label{lem4}\quad
\begin{enumerate}
\item[(1)] For any $u\in Y_k$, we have $J(u)\to-\infty$, as $\|u\|\to\infty$.
\item[(2)] There exists $r_k>0$ such that 
\begin{equation*}
\tilde{b_k}:=\inf_{\substack{u\in Z_k\\ \|u\|=r_k}}J(u)\to\infty,\quad\text{as }k\to\infty.
\end{equation*}
\end{enumerate}
\end{lem}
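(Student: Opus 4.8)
The plan is to mimic the proof of Lemma \ref{l1}, exploiting the fact that $J$ has essentially the same structure as $I_0$ with $F(x,u)$ replaced by $\frac{1}{q}|u|^q$, the nonlocal term $\frac14\int\phi_uu^2$ replaced by the nonnegative term $\frac18\int\psi_uu^4$, and the spatial dimension restricted to $N=1,2$ so that all of $L^r(\Omega)$ is available by Sobolev embedding for every $r\in[1,\infty)$.

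\begin{proof}[Proof of Lemma \ref{lem4}]
(1)\, Fix $u\in Y_k$, $u\neq0$, and write $u=tw$ with $\|w\|=1$, $t=\|u\|\geq0$. By Proposition \ref{propimpo}(ii) we have $\psi_{tw}=t^4\psi_w$, hence
\begin{equation*}
J(tw)=\frac{t^2}{2}+\frac{t^8}{8}\int_\Omega\psi_ww^4-\frac{t^q}{q}\int_\Omega|w|^q.
\end{equation*}
Since $Y_k$ is finite-dimensional all norms on it are equivalent, so there is $\nu_k>0$ with $\int_\Omega|w|^q\geq\nu_k$ for every $w\in Y_k$ with $\|w\|=1$; moreover by Proposition \ref{propimpo}(i) the quantity $\int_\Omega\psi_ww^4\leq C\|w\|^4|w|_8^4$ stays bounded on the unit sphere of $Y_k$, say by a constant $M_k$. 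Therefore
\begin{equation*}
J(tw)\leq\frac{t^2}{2}+\frac{M_k}{8}t^8-\frac{\nu_k}{q}t^q,
\end{equation*}
and since $q>8$ the right-hand side tends to $-\infty$ as $t\to\infty$, uniformly in $w$ on the unit sphere. This proves $J(u)\to-\infty$ as $\|u\|\to\infty$ with $u\in Y_k$.

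(2)\, Using the embedding $H_0^1(\Omega)\hookrightarrow L^q(\Omega)$ (valid for all $q$ since $N\leq2$) and dropping the nonnegative term $\frac18\int_\Omega\psi_uu^4\geq0$, we get for $u\in Z_k$
\begin{equation*}
J(u)\geq\frac12\|u\|^2-\frac{1}{q}|u|_q^q\geq\frac12\|u\|^2-\frac{1}{q}\tilde\beta_k^q\|u\|^q,\qquad\text{where }\tilde\beta_k:=\sup_{\substack{u\in Z_k\\ \|u\|=1}}|u|_q.
\end{equation*}
Choosing
\begin{equation*}
r_k:=\Big(\frac{q}{4\tilde\beta_k^q}\Big)^{\frac{1}{q-2}},
\end{equation*}
a short computation gives, for every $u\in Z_k$ with $\|u\|=r_k$,
\begin{equation*}
J(u)\geq\frac12 r_k^2-\frac1q\tilde\beta_k^q r_k^q=\frac14 r_k^2.
\end{equation*}
Finally, since $q<\infty$ is subcritical for $N\leq2$, Theorem 3.8 in \cite{W} gives $\tilde\beta_k\to0$ as $k\to\infty$, whence $r_k\to\infty$, and therefore $\tilde b_k\geq\frac14 r_k^2\to\infty$ as $k\to\infty$.
\end{proof}

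The only genuine point requiring care is the asymptotics in part (1): one must check that the subcritical growth (here $q>8$) wins against the \emph{octic} nonlocal term rather than against a quadratic one, which is exactly why the hypothesis $q>8$ enters — this is the analogue, in the present setting, of the condition $\mu>4$ used in $(f_3)$ to defeat the quartic nonlocal term of $(SP)_\lambda$. Once the homogeneity $\psi_{tu}=t^4\psi_u$ is used to make the nonlocal term genuinely of order $t^8$, the rest is the same finite-dimensional comparison argument as in Lemma \ref{l1}(1); part (2) is then routine given the Sobolev embedding and the decay $\tilde\beta_k\to0$.
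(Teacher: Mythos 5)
Your proof is correct and follows exactly the route the paper intends: the paper omits the proof of this lemma, saying only that it is ``similar to Lemma \ref{l1}'', and your argument is precisely that adaptation, with the one genuinely new point --- that the homogeneity $\psi_{tu}=t^4\psi_u$ makes the nonlocal term of order $t^8$, so that $q>8$ is what forces $J\to-\infty$ on $Y_k$ --- handled correctly. Part (2) reproduces the computation of Lemma \ref{l1}(2) with $\lambda=0$ and $p$ replaced by $q$, and the appeal to Theorem 3.8 of \cite{W} is valid since every finite exponent is subcritical for $N\leq2$.
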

\par Now we fix $k$ large enough and we set for $m>k+2$
\begin{equation*}
J_m:=J|_{Y_m},\, K_m:=\big\{u\in Y_m\,;\, J_m'(u)=0\big\},\, E_m:=Y_m\backslash K_m,
\end{equation*}
\begin{equation*}
P_m:=\big\{u\in Y_m\,;\,u(x)\geq0\big\},\,\, Z^m_k:=\oplus_{j=k}^m X_j,\,\text{ and }  N^m_k:=\big\{u\in Z^m_k\,|\,\|u\|=r_k\big\}.
\end{equation*}
\par For any $u\in Y_m$, we denote by $v=Pu\in Y_m$ the unique solution to the problem
\begin{equation*}
-\Delta v+\psi_uu^2v=|u|^{q-2}u, \quad v\in Y_m.
\end{equation*}
Then 
\begin{equation}\label{4.3}
\int_\Omega\nabla(Pu)\nabla w+\int_\Omega\psi_{u}u^2Puw-\int_\Omega|u|^{q-2}uw=0, \quad\forall w\in Y_m.
\end{equation}
We state the analogue of Lemma \ref{Aop} for the operator $P:Y_m\to Y_m$.
\begin{lem}\label{Pop}\quad
\begin{enumerate}
\item[(1)] $P$ is continuous and it maps bounded sets to bounded sets.
\item[(2)] For any $u\in Y_m$ we have
\begin{align}
\big<J_m'(u),u-Pu\big>\geq c_1\|u-Pu\|^2,\label{4.1}\\ 
\|J_m'(u)\|\leq c_2\big(1+\|u\|^6\big)\|u-Pu\|.\label{4.2}
\end{align}
\item[(3)] There exists $\mu_m\in]0,\delta_m[$ such that $P(\pm D^0_m)\subset \pm D^0_m$, where $\delta_m$ is defined in the same manner as in \eqref{deltam}.
\end{enumerate}
\end{lem}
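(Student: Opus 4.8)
The plan is to mimic the proofs of Lemmas 3.1, 3.2 and 3.4 of \cite{LWZ} (equivalently, Lemma \ref{Aop} of Section \ref{trois}), adapting the exponents to reflect that the non local term is now homogeneous of degree $8$ rather than of degree $4$. Throughout we work in the finite dimensional space $Y_m$, where all norms are equivalent, so a number of estimates that are delicate in the infinite dimensional setting become routine; in particular $\psi_u$ stays uniformly bounded on bounded sets by Proposition \ref{propimpo}(i), and the maps $u\mapsto\psi_u$, $u\mapsto|u|^{q-2}u$ are continuous from $Y_m$ to $Y_m$.

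First I would prove (1). Continuity: if $u_n\to u$ in $Y_m$, then $\psi_{u_n}\to\psi_u$ and $|u_n|^{q-2}u_n\to|u|^{q-2}u$ in $Y_m$; testing the equation \eqref{4.3} for $Pu_n$ and for $Pu$ against $w=Pu_n-Pu$, subtracting, and using the positivity of $\psi_{u}u^2$ together with the local boundedness of the coefficients, one gets $\|Pu_n-Pu\|^2\lesssim\|Pu_n-Pu\|\,\big(\|\psi_{u_n}u_n^2-\psi_uu^2\|_{?}+\||u_n|^{q-2}u_n-|u|^{q-2}u\|_{?}\big)\to0$, the intermediate norms being controlled by equivalence of norms on $Y_m$. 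Bounded-to-bounded: test \eqref{4.3} with $w=Pu$ to get $\|Pu\|^2\le\|Pu\|^2+\int_\Omega\psi_uu^2(Pu)^2=\int_\Omega|u|^{q-2}u\,Pu\le\||u|^{q-1}\|_{?}\|Pu\|$, hence $\|Pu\|\lesssim\|u\|^{q-1}$ by norm equivalence.

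Next, part (2). For \eqref{4.1}, compute $\big<J_m'(u),u-Pu\big>=\|u\|^2-\int_\Omega\nabla u\nabla(Pu)+\int_\Omega\psi_uu^3(u-Pu)-\int_\Omega|u|^{q-2}u(u-Pu)$, and subtract \eqref{4.3} with $w=u-Pu$; after cancellation the cross terms combine into $\|u-Pu\|^2+\int_\Omega\psi_uu^2(u-Pu)^2\ge\|u-Pu\|^2$, which is the claim with $c_1=1$ (the integral term being nonnegative since $\psi_u\ge0$). For \eqref{4.2}, given $v\in Y_m$ with $\|v\|=1$, write $\big<J_m'(u),v\big>=\big<J_m'(u),v\big>-\int_\Omega\big(\nabla(Pu)\nabla v+\psi_uu^2Pu\,v-|u|^{q-2}uv\big)$, which after simplification equals $\int_\Omega\nabla(u-Pu)\nabla v+\int_\Omega\psi_uu^2(u-Pu)v$; estimating with Hölder and the embeddings, and using $\|\psi_u\|\le C\|u\|^4$ plus $\|Pu\|\lesssim\|u\|^{q-1}$ absorbed into the $(1+\|u\|^6)$ factor (the sharp power being dictated by the degree-$8$ homogeneity; $q>8$ guarantees the mountain-pass geometry but does not affect this polynomial bound), yields $\|J_m'(u)\|\le c_2(1+\|u\|^6)\|u-Pu\|$.

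Finally, part (3). Fix $u\in\pm D_m^0$, set $v=Pu$ and let $w=\mp v^\mp=\mp\min\{0,\pm v\}$. Testing \eqref{4.3} with $w$ gives $-\|v^\mp\|^2-\int_\Omega\psi_uu^2(v^\mp)^2=\mp\int_\Omega|u|^{q-2}u\,v^\mp$, so $\|v^\mp\|^2\le\mp\int_\Omega|u|^{q-2}u\,v^\mp\le\int_\Omega|u|^{q-1}v^\mp\le\big||u|^{q-1}-|u^\pm|^{q-1}\big|_{?}\,|v^\mp|_{?}+\big||u^\pm|^{q-1}\big|_{?}|v^\mp|_{?}$; since $\mathrm{dist}(u,\pm P_m)=\|u^\mp\|$ is small and $t\mapsto t^{q-1}$ is locally Lipschitz, the first term is $O(\|u^\mp\|)\|v^\mp\|$ with the Lipschitz constant controlled on the bounded set $\overline{D_m^0}$, which forces $\|v^\mp\|=\mathrm{dist}(Pu,\pm P_m)$ to be as small as we please by shrinking $\mu_m$. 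Choosing $\mu_m\in\,]0,\delta_m[$ small enough that this bound is $<\mu_m$ gives $P(\pm D_m^0)\subset\pm D_m^0$. The main obstacle is bookkeeping the correct Sobolev/Hölder exponents so that every integral is finite for $N=1,2$ and every estimate lands with the stated power of $\|u\|$; once the exponents are pinned down the argument is a line-by-line transcription of \cite{LWZ}.
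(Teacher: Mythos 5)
Parts (1) and (2) of your proposal follow the paper's argument in substance and are fine; the only slip is the aside in (2) about absorbing $\|Pu\|\lesssim\|u\|^{q-1}$ into the factor $1+\|u\|^6$, which is both false (since $q-1>7$) and unnecessary, because after subtracting \eqref{4.3} the expression for $\langle J_m'(u),v\rangle$ contains $Pu$ only through the factor $u-Pu$, which you keep.

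Part (3) has a genuine gap. The step $\mp\int_\Omega|u|^{q-2}u\,v^\mp\le\int_\Omega|u|^{q-1}v^\mp$ discards exactly the sign information the argument needs. Take $u\in -D_m^0$, so $u^+$ is small and you must show $\|v^+\|$ is small: your chain gives $\|v^+\|^2\le\int_\Omega|u|^{q-1}v^+$, and no decomposition of $|u|^{q-1}$ can rescue this, since $|u|^{q-1}=(u^+)^{q-1}+(u^-)^{q-1}$ (disjoint supports) and the contribution $\int_\Omega(u^-)^{q-1}v^+$ is of order $\|u\|^{q-1}\|v^+\|$, not controlled by $\mathrm{dist}(u,-P_m)$; your two-term estimate leaves precisely this piece unaddressed (whichever of the two powers you subtract off, the remaining one is the large one). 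The correct move, which the paper makes, is the pointwise inequality $|u|^{q-2}u\le(u^+)^{q-1}$ (on $\{u\le0\}$ the left-hand side is $\le0$), giving $\|v^+\|^2\le\int_\Omega(u^+)^{q-1}v^+\le|u^+|_q^{q-1}|v^+|_q$ in one stroke; combined with $|u^+|_q\le c\,\mathrm{dist}(u,-P_m)$ and $\mathrm{dist}(v,-P_m)\le\|v^+\|$ this yields $\mathrm{dist}(Pu,-P_m)\le C\,\mathrm{dist}(u,-P_m)^{q-1}$, and since $q-1>1$ one chooses $\mu_m$ with $C\mu_m^{q-2}\le\tfrac12$. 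A second defect in your patch: $\overline{D_m^0}$ is an unbounded neighborhood of the cone $P_m$, so there is no uniform Lipschitz constant for $t\mapsto t^{q-1}$ on it; the invariance must hold for arbitrarily large $u\in D_m^0$, which is why the constant in the paper's final estimate is a pure Sobolev constant independent of $\|u\|$.
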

\begin{proof}
(1)\quad Let $u_n\to u$ in $Y_m$. By definition of $P$ we have for any $w\in Y_m$
\begin{align*}
 \int_\Omega\nabla(Pu_n)\nabla w+\int_\Omega\psi_{u_n}u_n^2Pu_nw-\int_\Omega|u_n|^{q-2}u_nw=0,\\
\int_\Omega\nabla(Pu)\nabla w+\int_\Omega\psi_{u}u^2Puw-\int_\Omega|u|^{q-2}uw=0.
\end{align*}
Taking $w=Pu_n-Pu$ in these identities and subtracting, we obtain
\begin{multline*}
\|Pu_n-Pu\|^2=\int_\Omega\big(\psi_uu^2Pu-\psi_{u_n}u_n^2Pu_n\big)(Pu_n-Pu)\\
+\int_\Omega\big(|u_n|^{q-2}u_n-|u|^{q-2}u\big)(Pu_n-Pu).
\end{multline*}
Observing that
\begin{multline*}
\psi_{u_n}u_n^2Pu_n-\psi_uu^2Pu=\psi_{u_n}(u_n-u)^2Pu_n+\big(\psi_{u_n}Pu_n-\psi_uPu\big)u^2\\
+\psi_{u_n}(u_n-u)uPu_n,
\end{multline*}
and that
\begin{align*}
\big(\psi_{u_n}Pu_n-\psi_uPu\big)(Pu-Pu_n)&=-\psi_{u_n}(Pu_n-Pu)^2+(\psi_{u_n}-\psi_u)Pu(Pu-Pu_n)\\
&\leq (\psi_{u_n}-\psi_u)Pu(Pu-Pu_n),
\end{align*}
we obtain, using the H\"{o}lder inequality and the Sobolev embedding theorem 
\begin{multline*}
\|Pu_n-Pu\|\leq C\big(\|\psi_u\|\|Pu_n\|\|u_n-u\|^2+\|\psi_{u_n}\|\|Pu_n\|\|Pu\|\|u_n-u\|\\+\|u\|^2\|Pu\||\psi_{u_n}-\psi_u|_5
+ \big||u_n|^{q-2}u_n-|u|^{q-2}u\big|_{\frac{q}{q-1}}\big).
\end{multline*}
By Proposition \ref{propimpo}
\begin{equation*}
|\psi_{u_n}-\psi_u|_5\to0.
\end{equation*}
By Theorem $A.2$ in \cite{W} 
\begin{equation*}
\big||u_n|^{q-2}u_n-|u|^{q-2}u\big|_{\frac{q}{q-1}}\to0.
\end{equation*}
It then follows that $\|Pu_n-Pu\|\to0$, that is $P$ is continuous.\\
To see that $P$ maps bounded sets to bounded sets, it suffices to take $w=Pu$ in \eqref{4.3} and use the H\"{o}lder inequality and the Sobolev embedding theorem.\\
(2)\quad Taking $w=u-Pu$ in \eqref{4.3}, we obtain
\begin{equation*}
\int_\Omega\nabla(Pu)\nabla (u-Pu)+\int_\Omega\psi_{u}u^2Pu(u-Pu)-\int_\Omega|u|^{q-2}u(u-Pu)=0.
\end{equation*}
Therefore we have
\begin{equation}
\big<J'_m(u),u-Pu\big>=\|u-Pu\|^2+\int_\Omega\psi_uu^2(u-Pu)^2\geq \|u-Pu\|^2.
\end{equation}
Using \eqref{4.3}, the H\"{o}lder inequality, the Sobolev embedding theorem, and Proposition \ref{propimpo}, we obtain for any $w$ in $Y_m$
\begin{align*}
\big|\big<J'_m(u),w\big>\big|&=\big|\int_\Omega(u-Pu)\nabla w+\int_\Omega\psi_uu^2(u-Pu)w\big|\\
&\leq \|u-Pu\|\|w\|+c_1\|u\|^6\|u-Pu\|\|w\|,
\end{align*}
which implies that
\begin{equation*}
\|J_m'(u)\|\leq c_2(1+\|u\|^6)\|u-Pu\|,
\end{equation*}
for some constants $c_1,c_2>0$.\\
(3) \quad Let $u\in Y_m$ and let $v=Pu$. Taking $w=v^+$ in \eqref{4.3}, we obtain
\begin{equation*}
\|v^+\|^2+\int_\Omega\psi_uu^2(v^+)^2=\int_\Omega v^+|u|^{q-2}u.
\end{equation*}
We then deduce, using the H\"{o}der inequality, that
\begin{equation}\label{v}
\|v^+\|^2\leq |u^+|_q^{q-1}|v^+|_q.
\end{equation}
On the other hand it is not difficult to see that $|u^+|_q\leq |u-w|_q$, for all $w\in -P_m$. Hence there is a constant $c_1=c_1(q)>0$ such that $|u^+|_q\leq c_1 dist(u,-P_m)$. It is obvious that $dist(v,-P_m)\leq \|v^+\|$. So we deduce from \eqref{v} and the Sobolev embedding theorem that
\begin{equation*}
dist(v,-P_m)\|v^+\|\leq c_2dist(u,-P_m)^{q-1}\|v^+\|.
\end{equation*}
This implies that 
\begin{equation*}
dist(v,-P_m)\leq  c_2 dist(u,-P_m)^{q-1}.
\end{equation*}
Similarly one shows that
\begin{equation*}
dist(v,P_m)\leq  c_3 dist(u,P_m)^{q-1}.
\end{equation*}
We can then find $\mu_m\in]0,\delta_m[$ small enough such that
\begin{equation*}
dist(v,\pm P_m)\leq \frac{1}{2} dist(u,\pm P_m)
\end{equation*}
whenever $dist(u,\pm P_m)<\mu_m$.
\end{proof}
\begin{lem}[\cite{LWZ}, Lemma 3.5]\label{Qop}
There exists an odd locally Lipschitz continuous operator $Q:E_m\to Y_m$ such that
\begin{enumerate}
\item[(1)] $\big<J_m'(u),u-Q(u)\big>\geq \frac{1}{2}\|u-Pu)\|^2$, for any $u\in E_m$.
\item[(2)] $\frac{1}{2}\|u-Q(u)\|\leq \|u-Pu\|\leq 2\|u-Qu\|$, for any $u\in E_m$.
\item[(3)] $Q\big((\pm D^0_m)\cap E_m\big)\subset \pm D^0_m$.
\end{enumerate} 
\end{lem}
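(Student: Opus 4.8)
The plan is to obtain $Q$ as a locally Lipschitz, odd regularisation of the (merely continuous) operator $P$ of Lemma \ref{Pop}, following the pseudo-gradient/partition-of-unity device of \cite[Lemma 3.5]{LWZ} (our Lemma \ref{Bop}), with $P$ and $J_m$ playing the roles of $A$ and $I_{\lambda,m}$. Since $J_m$ is even, $K_m$ is closed and symmetric, so $E_m=Y_m\setminus K_m$ is open and symmetric; moreover the fixed-point set of $P$ is exactly $K_m$, so $u\neq Pu$ for every $u\in E_m$. \emph{Localisation.} For each $u\in E_m$ I would use the continuity of $P$ and $J_m'$, together with \eqref{4.1}--\eqref{4.2} (recall the proof of \eqref{4.1} actually gives $\langle J_m'(v),v-Pv\rangle\ge\|v-Pv\|^2$), to choose an open neighbourhood $N_u\subset E_m$ of $u$ so small that the \emph{constant} vector $w_u:=Pu$ satisfies, for every $v\in N_u$,
\begin{equation*}
\|Pv-w_u\|\le\min\Big\{\tfrac16\|v-Pv\|,\ \tfrac{\|v-Pv\|}{2c_2(1+\|v\|^6)}\Big\};
\end{equation*}
this is possible because on a small $N_u$ both $\|v\|$ stays bounded and $\|v-Pv\|$ stays bounded away from $0$. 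From it one reads off at once that $\langle J_m'(v),v-w_u\rangle\ge\tfrac12\|v-Pv\|^2$ and $\tfrac56\|v-Pv\|\le\|v-w_u\|\le\tfrac76\|v-Pv\|$ for all $v\in N_u$. Shrinking $N_u$ once more, and using the \emph{strict} contraction $dist(Pv,\pm P_m)\le c\,dist(v,\pm P_m)^{q-1}$ obtained inside the proof of Lemma \ref{Pop}(3), I would also ensure that $N_u$ avoids $\pm D_m^0$ whenever $dist(u,\pm P_m)\ge\mu_m'$, where $\mu_m'\in(\mu_m,\delta_m)$ is fixed with $c(\mu_m')^{q-1}<\mu_m$; this forces $w_u\in\pm D_m^0$ whenever $N_u$ meets $\pm D_m^0$.

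\emph{Gluing.} Since $E_m$ is finite-dimensional, hence paracompact and metrisable, there is a locally Lipschitz partition of unity $\{\rho_i\}$ subordinate to $\{N_u\}_{u\in E_m}$, say $\operatorname{supp}\rho_i\subset N_{u_i}$; I set $\widetilde Q(u):=\sum_i\rho_i(u)\,P(u_i)$, which is locally Lipschitz. For fixed $u$ the vector $u-\widetilde Q(u)=\sum_i\rho_i(u)\big(u-P(u_i)\big)$ is a convex combination of vectors each lying within $\tfrac16\|u-Pu\|$ of $u-Pu$ (since $\rho_i(u)\neq0$ forces $u\in N_{u_i}$), so $\|u-\widetilde Q(u)-(u-Pu)\|\le\tfrac16\|u-Pu\|$; this yields $\tfrac12\|u-Pu\|\le\|u-\widetilde Q(u)\|\le2\|u-Pu\|$, and, since $w\mapsto\langle J_m'(u),u-w\rangle$ is affine, $\langle J_m'(u),u-\widetilde Q(u)\rangle\ge\tfrac12\|u-Pu\|^2$. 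Because $P_m$ is a convex cone, $dist(\cdot,\pm P_m)$ is convex and $\pm D_m^0$ convex; combined with the localisation step this gives $\widetilde Q\big((\pm D_m^0)\cap E_m\big)\subset\pm D_m^0$.

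\emph{Symmetrisation.} The defining problem \eqref{4.3} for $P$ is odd (as $\psi_{-u}=\psi_u$, $(-u)^2=u^2$ and $|-u|^{q-2}(-u)=-|u|^{q-2}u$), so $P(-u)=-Pu$; also $E_m$, $K_m$ and $\pm D_m^0$ are symmetric. Setting $Q(u):=\tfrac12\big(\widetilde Q(u)-\widetilde Q(-u)\big)$ yields an odd, locally Lipschitz operator, and each of (1)--(3) survives: (1) and (2) follow by applying the gluing-step estimates at both $u$ and $-u$ and using $J_m'(-u)=-J_m'(u)$ and $P(-u)=-Pu$; (3) follows since $u\in D_m^0$ implies $-u\in -D_m^0$, whence $\widetilde Q(u)\in D_m^0$ and $-\widetilde Q(-u)\in D_m^0$, so $Q(u)\in D_m^0$ by convexity of $D_m^0$ (similarly for $-D_m^0$).

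I expect the one genuinely delicate point to be the bookkeeping in the localisation step: the neighbourhoods $N_u$ must be chosen small enough \emph{simultaneously} to transfer the two quantitative inequalities from the moving direction $Pv$ to the fixed direction $w_u=Pu$ — which, because of the factor $(1+\|v\|^6)$ in \eqref{4.2}, requires controlling $\|v\|$ on $N_u$ — and to respect the cones $\pm D_m^0$; it is exactly here that one needs the strict-contraction form of Lemma \ref{Pop}(3), not merely the invariance $P(\pm D_m^0)\subset\pm D_m^0$. Everything else is a routine adaptation of \cite[Lemma 3.5]{LWZ}.
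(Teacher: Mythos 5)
The paper offers no proof of this lemma, deferring entirely to the citation \cite[Lemma 3.5]{LWZ}, and your construction is a correct and faithful reconstruction of exactly that standard argument (locally Lipschitz regularisation of $P$ by a partition of unity with constant values $P(u_i)$, transfer of the quantitative estimates \eqref{4.1}--\eqref{4.2} via small neighbourhoods, cone invariance from the strict contraction in Lemma \ref{Pop}(3) and convexity of $\pm D_m^0$, and the usual odd symmetrisation $Q(u)=\tfrac12(\widetilde Q(u)-\widetilde Q(-u))$). All the estimates check out, so there is nothing to add beyond the routine tightening you already flag in the localisation step.
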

\begin{rem}\label{rema}
Lemmas \ref{Pop} and \ref{Qop} imply that 
\begin{align*}
\big<J_m'(u),u-Q(u)\big>&\geq\frac{1}{8}\|u-Q(u)\|^2 \text{ and}\\
\|J_m'(u)\|\leq c_3(1&+\|u\|^6)\|u-Q(u)\|,\text{ for all } u\in E_m.
\end{align*}
\end{rem}
\begin{lem}\label{tech4}
Let $c<d$ and $\alpha>0$. For all $u\in Y_m$ such that $J_m(u)\in[c,d]$ and $\|J_m'(u)\|\geq\alpha$, there exists $\beta>0$ such that $\|u-B(u)\|\geq\beta$.
\end{lem}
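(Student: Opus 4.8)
The statement is the exact analogue of Lemma \ref{tech} (quoted from \cite{LWZ}) for the functional $J_m$ in place of $I_{\lambda,m}$, so the plan is to argue exactly as there, replacing the operator $B$ by $Q$ and using the estimates collected in Remark \ref{rema}. (Note the statement as printed writes $\|u-B(u)\|$; this is a typo for $\|u-Q(u)\|$, since $B$ is the operator attached to the first problem; I will phrase the proof with $Q$.) First I would argue by contradiction: suppose there is a sequence $u_n\in Y_m$ with $J_m(u_n)\in[c,d]$ and $\|J_m'(u_n)\|\geq\alpha$, but $\|u_n-Q(u_n)\|\to0$.

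The key step is to show that such a sequence must be bounded in $Y_m$. For this one uses the analogue of the estimate \eqref{bd}: combining $\langle J_m'(u),u-Q(u)\rangle\geq\frac18\|u-Q(u)\|^2\geq0$ (Remark \ref{rema}) with the explicit form of $J_m$, one gets, for a suitable $\theta\in(2,q)$ (say $\theta$ between $8$ and $q$, using $q>8$ so that the nonlocal term $\frac18\int_\Omega\psi_uu^4$ is homogeneous of degree $8<q$),
\begin{equation*}
d+\text{const}\cdot\|u_n\|\geq J_m(u_n)-\tfrac{1}{\theta}\langle J_m'(u_n),u_n\rangle+\tfrac{1}{\theta}\langle J_m'(u_n),u_n-Q(u_n)\rangle\geq\big(\tfrac12-\tfrac1\theta\big)\|u_n\|^2+\big(\tfrac18-\tfrac1\theta\big)\!\int_\Omega\psi_{u_n}u_n^4,
\end{equation*}
and since $\psi_{u_n}\geq0$ by Proposition \ref{propimpo}, both coefficients on the right are nonnegative once $8<\theta<q$; hence $\|u_n\|$ stays bounded. (Here one also uses $|\langle J_m'(u_n),u_n-Q(u_n)\rangle|\leq\|J_m'(u_n)\|\,\|u_n-Q(u_n)\|\to0$ along the sequence and $\|J_m'(u_n)\|\,\|u_n\|$ is controlled by $(1+\|u_n\|^6)\|u_n-Q(u_n)\|\cdot\|u_n\|\to0$ once boundedness is established in a bootstrap, or more simply one absorbs it directly.)

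Once $(u_n)$ is bounded, the contradiction is immediate from the second estimate of Remark \ref{rema}: $\|J_m'(u_n)\|\leq c_3(1+\|u_n\|^6)\|u_n-Q(u_n)\|$, and the right-hand side tends to $0$ because $\|u_n\|$ is bounded while $\|u_n-Q(u_n)\|\to0$; this contradicts $\|J_m'(u_n)\|\geq\alpha>0$. Therefore no such sequence exists, which means there is $\beta>0$ with $\|u-Q(u)\|\geq\beta$ for every $u\in Y_m$ satisfying $J_m(u)\in[c,d]$ and $\|J_m'(u)\|\geq\alpha$, as claimed.

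I expect the only real point requiring care to be the boundedness step, and specifically the choice of the homogeneity exponent $\theta$: one needs $\theta>8$ so that the $\psi_u u^4$ term contributes with the correct (nonnegative) sign after subtracting $\tfrac1\theta\langle J_m'(u),u\rangle$, and $\theta<q$ so that the $|u|^q$ term drops out with a nonnegative leftover — this is exactly where the hypothesis $q>8$ is used. Everything else is a routine application of the Cauchy–Schwarz inequality in $X$ together with the a priori bounds already recorded in Lemma \ref{Pop} and Remark \ref{rema}; no new Sobolev or compactness input is needed since $Y_m$ is finite dimensional.
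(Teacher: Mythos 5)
Your overall strategy --- argue by contradiction, establish boundedness of the contradicting sequence via an Ambrosetti--Rabinowitz-type computation with an exponent $\theta\in\,]8,q[$, then conclude from the bound $\|J_m'(u_n)\|\leq c_3(1+\|u_n\|^6)\|u_n-Qu_n\|$ of Remark \ref{rema} --- is the same as the paper's, and your reading of $B$ as a typo for $Q$ is correct. However, the boundedness step as written has a genuine gap: the left-hand bound $d+\mathrm{const}\cdot\|u_n\|$ for $J_m(u_n)-\tfrac1\theta\langle J_m'(u_n),u_n\rangle+\tfrac1\theta\langle J_m'(u_n),u_n-Qu_n\rangle$ presupposes that $\|J_m'(u_n)\|$ is bounded by a constant, which is exactly what is not known (only $\|J_m'(u_n)\|\geq\alpha$ is assumed). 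Of the two repairs you sketch in parentheses, the ``bootstrap'' is circular, and the ``direct absorption'' cannot work with the terms you kept on the coercive side: having discarded the $(\tfrac1\theta-\tfrac1q)|u_n|_q^q$ term, your right-hand side grows only like $\|u_n\|^2$, whereas the best available control of $\langle J_m'(u_n),u_n\rangle$, namely $c_3(1+\|u_n\|^6)\|u_n-Qu_n\|\,\|u_n\|$, is of order $\varepsilon_n\|u_n\|^7$ with $\varepsilon_n\to0$, which for each fixed $n$ can dominate $\|u_n\|^2$.

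The fix stays entirely within your framework: keep the exact identity $J_m(u)-\tfrac1\theta\langle J_m'(u),u\rangle=(\tfrac12-\tfrac1\theta)\|u\|^2+(\tfrac18-\tfrac1\theta)\int_\Omega\psi_uu^4+(\tfrac1\theta-\tfrac1q)|u|_q^q$, retain the last term, use the equivalence of $\|\cdot\|$ and $|\cdot|_q$ on the finite-dimensional $Y_m$ to put $c\|u_n\|^q$ on the coercive side, and bound $|\langle J_m'(u_n),u_n\rangle|\leq c_3(1+\|u_n\|^6)\|u_n-Qu_n\|\,\|u_n\|=\varepsilon_n\,O(1+\|u_n\|^7)$; since $q>8>7$, this forces $(\|u_n\|)$ to be bounded, and the rest of your argument is exactly the paper's conclusion. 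For comparison, the paper avoids invoking the derivative bound at this stage: it subtracts $\tfrac1\mu\langle u,u-Pu\rangle$ (the $X$-inner product, controlled directly by $\|u\|\,\|u-Pu\|$) rather than $\tfrac1\mu\langle J_m'(u),u\rangle$, at the price of a nonlocal cross term $\int_\Omega\psi_uu^3(Pu-u)$, which it absorbs by Cauchy--Schwarz and the elementary inequality $as-s^2\leq a^2/4$, arriving at $\|u\|^q\leq C\big[|J_m(u)|+\|u\|\|u-Qu\|+\|u\|^6\|u-Qu\|^2\big]$. Once repaired, your route is in fact slightly cleaner, since $\langle J_m'(u),u\rangle$ produces no cross term; the extra summand $\tfrac1\theta\langle J_m'(u_n),u_n-Qu_n\rangle$ you inserted is nonnegative but plays no role.
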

\begin{proof}
Fix $\mu\in]8,q[$. Using \eqref{4.3}, we obtain
\begin{multline*}
J_m(u)-\frac{1}{\mu}\big<u,u-Pu\big>=\big(\frac{1}{2}-\frac{1}{\mu}\big)\|u\|^2+\big(\frac{1}{\mu}-\frac{1}{q}\big)|u|^q_q+\big(\frac{1}{8}-\frac{1}{\mu}\big)\int_\Omega\psi_uu^4\\
-\frac{1}{\mu}\int_\Omega\psi_uu^3(Pu-u).
\end{multline*}
Since the norms $\|\cdot\|$ and $|\cdot|_q$ are equivalent on $Y_m$, it follows that
\begin{equation*}
\|u\|^q+\int_\Omega\psi_uu^4\leq c\Big[|J_m(u)|+\|u\|\|u-Pu\|+\big|\int_\Omega\psi_uu^3(Pu-u)\big|\Big].
\end{equation*}
Using H\"{o}lder inequality and Sobolev embedding theorem, it is easy to see that
\begin{equation*}
\big|\int_\Omega\psi_uu^3(Pu-u)\big|\leq c_1\|u\|^3\|u-Pu\|\Big(\int_\Omega\psi_uu^4\Big)^{\frac{1}{2}}.
\end{equation*}
Noting that the function $[0,+\infty[\to\mathbb{R}$, $s\mapsto as-s^2$, attains its maximum at $a/2$, we finally obtain, using Lemma \ref{Qop}-(2)
\begin{equation}\label{4.4}
\|u\|^q\leq C\big[|J_m(u)|+\|u\|\|u-Qu\|+\|u\|^6\|u-Qu\|^2\big].
\end{equation}
Suppose that there exists a sequence $(u_n)\subset Y_m$ such that 
\begin{equation*}
J_m(u_n)\in[c,d],\quad \|J_m'(u_n)\|\geq\alpha,\quad\text{and }\|u_n-Qu_n\|\to0.
\end{equation*}
By \eqref{4.4} we see that $(\|u_n\|)$ is bounded. It follows from Remark \ref{rema} above that $J'_m(u_n)\to0$, which is a contradiction.
\end{proof}
\begin{proof}[Proof Theorem \ref{result3}]
By Lemmas \ref{lem4}, \ref{Qop} and \ref{tech4} and Remark \ref{rema}, the assumptions of Theorem \ref{scft} are satisfied. Therefore, applying Theorem \ref{scft} we obtain a sequence $(u_{k,m}^n)_n\subset V_{\frac{\mu_m}{2}}(S_m)$ such that
\begin{equation*}
\lim_{\substack{n\to\infty}}J_m'(u_{k,m}^n)=0\quad\text{and}\quad \lim_{\substack{n\to\infty}}J_m(u_{k,m}^n)\in\big[\tilde{b_k},\max_{\substack{u\in B_k}}J(u)\big],
\end{equation*}
where $\tilde{b_k}$ is defined in Lemma \ref{lem4}.\\
For any $u\in Y_m$ we have
\begin{equation*}
J_m(u)-\frac{1}{q}\big<J'_m(u),u\big>=\big(\frac{1}{2}-\frac{1}{q}\big)\|u\|^2+\big(\frac{1}{4}-\frac{1}{q}\big)\int_\Omega\psi_uu^4\geq \big(\frac{1}{2}-\frac{1}{q}\big)\|u\|^2.
\end{equation*}
This implies that
\begin{equation}\label{4.5}
\|u\|^2\leq C\big(|J_m(u)|+\|u\|\|J'_m(u)\|\big).
\end{equation}
We deduce from \eqref{4.5} that the sequence $(u_{k,m}^n)_n$ obtained above is bounded. Hence up to a subsequence $u_{k,m}^n\to u_{k,m}$ in $Y_m$, as $n\to\infty$. Since $V_{\frac{\mu_m}{2}}(S_m)$ is closed, $u_{k,m}$ belongs to $V_{\frac{\mu_m}{2}}(S_m)$. Since $J_m$ is smooth, we see that
\begin{equation*}
J_m'(u_{k,m})=0\quad\text{and}\quad J_m(u_{k,m})\in\big[\tilde{b_k},\max_{\substack{u\in B_k}}J(u)\big].
\end{equation*}
 Using the same argument as in the proof of Theorem \ref{result1}, we show that the sequence $(u_{k,m})_m$ converges, up to a subsequence, to a sign-changing critical point $u_k$ of $J$ such that $J(u_k)\geq \tilde{b_k}$. We then conclude by using the fact that $\tilde{b_k}\to\infty$, as $k\to\infty$.
\end{proof}

\section*{Acknowledgement}
This research was supported by The Fields Institute for Research in Mathematical Sciences and The Perimeter Institute for Theoretical Physics through a Fields-Perimeter Africa Postdoctoral Fellowship.

\section*{Appendix}
In this appendix, we provide the proof of Lemma \ref{defor}.
\par We first recall the following helpful lemma. 
\begin{lem}[\cite{Zou08}, Lemma 1.46]\label{helpful}
Let $\mathcal{M}$ be a closed convex subset of a Banach space $E$. If $H:\mathcal{M}\to E$ is a locally Lipschitz continuous map such that
\begin{equation*}
\lim_{\substack{\beta\to0^+}}\frac{dist\big(u+\beta H(u),\mathcal{M}\big)}{\beta}=0,\quad\forall u\in\mathcal{M},
\end{equation*}
then for any $u_0\in\mathcal{M}$, there exists $\delta>0$ such that the initial value problem
\begin{equation*}
\frac{d\sigma(t,u_0)}{dt}=H\big(\sigma(t,u_0)\big),\quad \sigma(0,u)=u_0,
\end{equation*}
has a unique solution defined on $[0,\delta)$. Moreover, $\sigma(t,u_0)\in\mathcal{M}$ for all $t\in[0,\delta)$.
\end{lem}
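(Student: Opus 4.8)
The plan is to prove the lemma by the classical Euler polygonal (Nagumo--Brezis) method: build piecewise-affine approximate trajectories that stay in $\mathcal{M}$ because $\mathcal{M}$ is convex, use the weak tangency hypothesis to control the defect of these approximants, and use the local Lipschitz continuity of $H$ to pass to the limit and to get uniqueness. First I would localize: fix $u_0\in\mathcal{M}$ and pick $r,L,M>0$ so that $H$ is $L$-Lipschitz and bounded by $M$ on $\overline{B}(u_0,2r)\cap\mathcal{M}$, and set $\delta:=r/\bigl(2(M+1)\bigr)$. With this choice every approximant constructed below will remain in $\overline{B}(u_0,2r)\cap\mathcal{M}$ on $[0,\delta)$, so the bounds on $H$ apply throughout.

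Next, for each $n\ge1$ I would construct $\sigma_n:[0,\delta)\to\mathcal{M}$ as follows. Set $t_0=0$, $\sigma_n(0)=u_0$; given $t_i$ and $\sigma_n(t_i)\in\overline{B}(u_0,2r)\cap\mathcal{M}$, the hypothesis
\[
\lim_{\beta\to0^+}\frac{dist(u+\beta H(u),\mathcal{M})}{\beta}=0
\]
applied at $u=\sigma_n(t_i)$ yields, for all small $h>0$, a point $w\in\mathcal{M}$ with $\|\sigma_n(t_i)+hH(\sigma_n(t_i))-w\|\le h/n$; choose such a step $h_i\in(0,1/n]$ with $t_i+h_i\le\delta$, a corresponding $w_i\in\mathcal{M}$, and define $\sigma_n$ on $[t_i,t_{i+1}]$, $t_{i+1}:=t_i+h_i$, to be the affine segment from $\sigma_n(t_i)$ to $w_i$. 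Convexity of $\mathcal{M}$ keeps this segment in $\mathcal{M}$, and on $(t_i,t_{i+1})$ one computes $\dot\sigma_n(t)=H(\sigma_n(\tau_n(t)))+e_n(t)$ with $\tau_n(t):=t_i$, $\|e_n(t)\|\le1/n$, hence $\|\dot\sigma_n\|\le M+1$ and $|t-\tau_n(t)|\le1/n$. Choosing each $h_i$ to be at least half the supremum of admissible step lengths forces $\{t_i\}$ to exhaust $[0,\delta)$: if $t_i\uparrow t^{*}<\delta$, the uniform Lipschitz bound makes $(\sigma_n(t_i))$ Cauchy with limit $p\in\mathcal{M}$, and the tangency condition at $p$ contradicts the maximality of the chosen steps. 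I expect this covering argument to be the main technical obstacle.

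Then I would show convergence. For $m,n\ge1$ put $g(t):=\|\sigma_m(t)-\sigma_n(t)\|$; since $g(0)=0$ and, for a.e.\ $t$, using the $L$-Lipschitz bound on $H$,
\[
g'(t)\le\|\dot\sigma_m(t)-\dot\sigma_n(t)\|\le L\bigl(g(t)+(M+1)(\tfrac1m+\tfrac1n)\bigr)+\tfrac1m+\tfrac1n,
\]
Gr\"onwall's inequality gives $\sup_{[0,\delta)}g\to0$ as $m,n\to\infty$. Thus $\sigma_n\to\sigma$ uniformly on $[0,\delta)$ for some Lipschitz $\sigma$, and $\sigma(t)\in\mathcal{M}$ since $\mathcal{M}$ is closed. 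Passing to the limit in $\sigma_n(t)=u_0+\int_0^t\bigl(H(\sigma_n(\tau_n(s)))+e_n(s)\bigr)ds$ --- legitimate because $\sigma_n(\tau_n(\cdot))\to\sigma$ uniformly (mesh $\le1/n$), $H$ is continuous, and $e_n\to0$ uniformly --- yields $\sigma(t)=u_0+\int_0^tH(\sigma(s))\,ds$, so $\sigma\in C^1([0,\delta),E)$ solves the initial value problem and stays in $\mathcal{M}$.

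Finally, for uniqueness, if $\widetilde\sigma$ is any solution of the initial value problem on some interval, then necessarily $\widetilde\sigma(t)\in\mathcal{M}$ for all $t$ (otherwise $H(\widetilde\sigma(t))$ is undefined), so on a possibly smaller interval both $\sigma$ and $\widetilde\sigma$ take values in $\overline{B}(u_0,2r)\cap\mathcal{M}$ where $H$ is $L$-Lipschitz; then $\frac{d}{dt}\|\sigma-\widetilde\sigma\|\le L\|\sigma-\widetilde\sigma\|$ together with $\|\sigma(0)-\widetilde\sigma(0)\|=0$ forces $\sigma\equiv\widetilde\sigma$ by Gr\"onwall, completing the proof.
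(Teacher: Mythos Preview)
The paper does not prove this lemma at all: it is quoted verbatim from Zou's monograph \cite{Zou08} as a known auxiliary result and is used without argument in the proof of Lemma~\ref{defor}. So there is no ``paper's own proof'' to compare against.

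Your argument is the classical Nagumo--Brezis/Euler polygon proof of the viability theorem for locally Lipschitz fields tangent to a closed convex set, and it is essentially correct. The only place that deserves a little more care is the exhaustion step: the tangency hypothesis is pointwise, so to rule out $t_i\uparrow t^{*}<\delta$ you cannot invoke the condition at $p=\lim\sigma_n(t_i)$ alone; you must combine it with the $L$-Lipschitz bound on $H$ near $p$ to transfer a fixed admissible step length $h>0$ from $p$ back to the nearby points $\sigma_n(t_i)$ (e.g.\ pick $w'\in\mathcal{M}$ with $\|p+hH(p)-w'\|\le h/(3n)$ and estimate $\|\sigma_n(t_i)+hH(\sigma_n(t_i))-w'\|\le(1+hL)\|\sigma_n(t_i)-p\|+h/(3n)\le h/n$ for $i$ large). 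With that detail filled in, your proof goes through as written.
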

\begin{proof}[Proof Lemma \ref{defor}]
Define $V:E_m\to Y_m$ by $V(u)=u-B(u)$, where $B$ is given by $(A_3)$. Then there is $\delta>0$ such that $V(u)\geq\delta$ for any $u\in\Phi_m^{-1}\big([c-2\varepsilon_0,c+2\varepsilon_0]\big)\cap V_{\mu_m/2}(S_m)$ \big(in view $(A_3)$-(iii)\big). We take $\varepsilon\in ]0,\min(\varepsilon_0,\frac{\delta\alpha_1\mu_m}{8})[$ and we define
\begin{equation*}
A_1:=\Phi_m^{-1}\big([c-2\varepsilon,c+2\varepsilon]\big)\cap V_{\frac{\mu_m}{2}}(S_m),\quad A_2:=\Phi_m^{-1}\big([c-\varepsilon,c+\varepsilon]\big)\cap V_{\frac{\mu_m}{4}}(S_m),
\end{equation*}
\begin{equation*}
\chi(u):=\frac{dist(u,Y_m\backslash A_1)}{dist(u,Y_m\backslash A_1)+dist(u,A_2)}, \quad u\in Y_m
\end{equation*}
so that $\chi=0$ on $Y_m\backslash A_1$, $\chi=1$ on $A_2$, and $0\leq\chi\leq1$.\\
Consider the vector field
\begin{equation*}
W(u):=\left\{
      \begin{array}{ll}
        \chi(u)\|V(u)\|^{-2}V(u), \quad\text{ for }u\in  A_1 & \hbox{} \\
        \qquad \quad 0, \quad\textnormal{ for }u\in Y_m\backslash A_1. & \hbox{}
      \end{array}
    \right.
\end{equation*}
Clearly $W$ is odd and locally Lipschitz continuous. Moreover, by our choice of $\varepsilon$ above we have 
\begin{equation}\label{fourr}
\|W(u)\|\leq\frac{1}{\delta}\leq\frac{\alpha_1\mu_m}{8\varepsilon},\quad\forall u\in Y_m.
\end{equation}
 It follows that the Cauchy problem
\begin{equation*}
\frac{d}{dt}\sigma(t,u)=-W(\sigma(t,u)),\qquad\sigma(0,u)=u\in Y_m
\end{equation*}
has a unique solution $\sigma(\cdot, u)$ defined on $\mathbb{R}_+$. Moreover, $\sigma$ is continuous on $\mathbb{R}_+\times X$.\\
We have in view of \eqref{fourr}
\begin{equation}\label{two}
\|\sigma(t,u)-u\|\leq \int_0^t\|W(\sigma(s,u))\|ds\leq\frac{\alpha_1\mu_m}{8\varepsilon}t,
\end{equation}
and by $(A_3)$-(ii) 
\begin{align}
\nonumber \frac{d}{dt}\Phi_m(\sigma(t,u))&=-\big<\Phi_m'(\sigma(t,u)),\chi(\sigma(t,u))\|V(\sigma(t,u))\|^{-2}V(\sigma(t,u))\big>\\
& \leq -\alpha_1\chi(\sigma(t,u)).\label{three}
\end{align}
Define 
\begin{equation*}
\eta:[0,1]\times Y_m\to Y_m,\qquad \eta(t,u):=\sigma\big(\frac{2\varepsilon}{\alpha_1}t,u\big).
\end{equation*}
Conclusion (i) of the lemma is clearly satisfied and by \eqref{three} above (iii) is also satisfied. Since $W$ is odd, (v) is a consequence of the uniqueness of the solution to the above Cauchy problem.
\par We now verify (ii). Let $v\in\eta(1,\Phi_m^{-1}(]-\infty,c+\varepsilon])\cap S_m)$. Then $v=\eta(1,u)=\sigma(\frac{2\varepsilon}{\alpha_1},u)$, where $u\in \Phi_m^{-1}(]-\infty,c+\varepsilon])\cap S_m$. \\
If there exists $t\in[0,\frac{2\varepsilon}{\alpha_1}]$ such that $\Phi_m(\sigma(t,u))<c-\varepsilon$, then by (iii) we have $\Phi_m(v)<c-\varepsilon$.\\
Assume now that $\sigma(t,u)\in\Phi_m^{-1}([c-\varepsilon,c+\varepsilon])$ for all $t\in[0,\frac{2\varepsilon}{\alpha_1}]$. By \eqref{two} we have $\|\sigma(t,u)-u\|\leq\frac{\mu_m}{4}$, which means, since $u\in S_m$, that $\sigma(t,u)\in V_{\frac{\mu_m}{4}}(S_m)$. Hence $\sigma(t,u)\in A_2$ and since $\chi=1$ on $A_2$, we deduce from \eqref{three} that
\begin{equation*}
\Phi_m\big(\sigma(\frac{2\varepsilon}{\alpha_1},u)\big)\leq \Phi_m(u)-\alpha_1\int_0^{\frac{2\varepsilon}{\alpha_1}}\chi(\sigma(t,u))dt=\Phi_m(u)-2\varepsilon.
\end{equation*}
This implies, since $\Phi_m(u)\leq c+\varepsilon$, that $\Phi_m(v)=\Phi_m(\sigma(\frac{2\varepsilon}{\alpha_1},u))\leq c-\varepsilon$. Hence (ii) is satisfied.
\par It remains to verify (iv). Since $\sigma$ is odd in $u$, it suffices to show that
\begin{equation}\label{seven}
\sigma\big([0,+\infty)\times D_m^0\big)\subset D_m^0.
\end{equation}
\textbf{Claim:} We have
\begin{equation}\label{eight}
\sigma([0,+\infty)\times \overline{D_m^0})\subset \overline{D_m^0}.
\end{equation}
Assume by contradiction  that \eqref{seven} does not hold. Then there exist $u_0\in D_m^0$ and $t_0>0$ such that $\sigma(t_0,u_0)\notin D_m^0$. Choose a neighborhood $N_{u_0}$ of $u_0$ such that $N_{u_0}\subset D_m^0$. Then there is a neighborhood $N_0$ of $\sigma(t_0,u_0)$ such that $\sigma(t_0,\cdot):N_{u_0}\to N_0$ is a homeomorphism. Since $\sigma(t_0,u_0)\notin D_m^0$, the set $N_0\backslash \overline{D_m^0}$ is not empty. Hence there is $w\in N_{u_0}$ such that $\sigma(t_0,w)\in N_0\backslash \overline{D_m^0}$, contradicting \eqref{eight}.\\
We now terminate by giving the proof of our above claim.\\
By $(A_3)$-(i) we have $B( D_m^0\cap E_m)\subset D_m^0$, which implies that $B( \overline{D_m^0}\cap E_m)\subset  \overline{D_m^0}$. Obviously $\sigma(t,u)=u$ for all $t\in[0,1]$ and $u\in \overline{D^0_m}\cap K_m$.\\
Assume that $u\in \overline{D^0_m}\cap E_m$. If there is $t_1\in(0,1]$ such that $\sigma(t_1,u)\notin \overline{D_m^0}$, then there would be $s_1\in[0,t_1)$ such that $\sigma(s_1,u)\in\partial\overline{D_m^0}$ and $\sigma(t,u)\notin \overline{D_m^0}$ for all $t\in(0,t_1]$. The following Cauchy problem
\begin{equation*}
\frac{d}{dt}\mu(t,\sigma(s_1,u))=-W\big(\mu(t,\sigma(s_1,u))\big),\qquad \mu(0,\sigma(s_1,u))=\sigma(s_1,u)\in Y_m
\end{equation*}
has $\sigma(t,\sigma(s_1,u))$ as unique solution. Recalling that $W=0$ on $Y_m\backslash A_1$, we have $v-W(v)\in \overline{D_m^0}\cap (Y_m\backslash A_1)$ for any $v\in\overline{D_m^0}\cap(Y_m\backslash A_1)$.\\
Assume that $v\in A_1\cap \overline{D_m^0}$. Since $\|V(u)\|\geq\delta$, we deduce that $1-\beta\chi(v)\|V(v)\|^{-2}\geq0$ for all $\beta$ such that $0<\beta\leq \delta^2$. Recalling that $v\in\overline{D_m^0}$ implies $dist(v,P_m)\leq\mu_m$, that $V(v)=v-B(v)$, and that $aP_m+bP_m\subset P_m$ for all $a,b\geq0$ \big(because $P_m$ is a cone\big), we obtain for any $\beta\in]0,\delta^2]$
\begin{align*}
dist\big(v-\beta W(v),P_m\big)&=dist\big(v-\beta\chi(v)\|V(v)\|^{-2}V(v),P_m\big)\\
&=dist\big(v-\beta\chi\|V(v)\|^{-2}(v-B(v)),P_m\big)\\
&=dist\Big((1-\beta\chi(v)\|V(v)\|^{-2})v+\beta\chi(v)\|V(v)\|^{-2}B(v),P_m\Big)\\
&\leq dist\Big((1-\beta\chi(v)\|V(v)\|^{-2})v+\beta\chi(v)\|V(v)\|^{-2}B(v),\\
&\beta\chi(u)\|V(v)\|^{-2}P_m+(1-\beta\chi(v)\|V(v)\|^{-2})P_m\Big)\\
\leq (1&-\beta\chi(v)\|V(v)\|^{-2})dist(v,P_m)+\beta\chi(v)\|V(v)\|^{-2}dist(B(v),P_m)\\
&\leq (1-\beta\chi(v)\|V(v)\|^{-2})\mu_m+\beta\chi(v)\|V(v)\|^{-2}\mu_m\\
&=\mu_m.
\end{align*}
It follows that $v-\beta W(v)\in\overline{D_m^0}$ for $0<\beta\leq \delta^2$. This implies that
\begin{equation*}
\lim_{\substack{\beta\to0^+}}\frac{dist\big(v+\beta(- W(v)),\overline{D_m^0}\big)}{\beta}=0,\quad\forall u\in\overline{D_m^0}.
\end{equation*}
By Lemma \ref{helpful} there then exists $\delta_0>0$ such that $\sigma\big(t,\sigma(s_1,u)\big)\in \overline{D_m^0}$ for all $t\in[0,\delta_0)$. This implies that $\sigma\big(t,\sigma(s_1,u)\big)=\sigma(t+s_1,u)\in \overline{D_m^0}$ for all $t\in[0,\delta_0)$, which contradicts the definition of $s_1$. This last contradiction assures that $\sigma([0,+\infty)\times\overline{D_m^0})\subset\overline{D_m^0}$.
\end{proof}


%
%
\end{document}